\documentclass[11pt]{article}
\usepackage{amssymb}
\usepackage{amsthm}
\usepackage{amsmath, mathrsfs}
\parindent=0.pt
\usepackage{amsfonts}
\pagestyle{plain}
\sloppy
\parskip=0cm
\setlength{\rightmargin}{0cm}
\setlength{\textwidth}{16.0cm}
\setlength{\textheight}{22.5cm}
\setlength{\oddsidemargin}{0cm}
\setlength{\evensidemargin}{0cm}
\setlength{\topmargin}{-0.9cm}
\setlength{\footskip}{1.5cm}
\newcommand{\be}{\begin{equation}}
\newcommand{\ee}{\end{equation}}
\newcommand{\bea}{\begin{eqnarray*}}
\newcommand{\eea}{\end{eqnarray*}}
\newcommand{\ba}{\begin{array}}
\newcommand{\ea}{\end{array}}
\newcommand{\bi}{\begin{itemize}}
\newcommand{\ei}{\end{itemize}}
\newcommand{\bc}{\begin{center}}
\newcommand{\ec}{\end{center}}
\newcommand{\bfr}{\begin{flushright}}
\newcommand{\efr}{\end{flushright}}

\newtheorem{Pa}{Paper}[section]
\newtheorem{theorem}[Pa]{{\bf Theorem}}
\newtheorem{La}[Pa]{{\bf Lemma}}
\newtheorem{Dn}[Pa]{{\bf Definition}}
\newtheorem{Cy}[Pa]{{\bf Corollary}}

\newtheorem{Rk}[Pa]{{\bf Remark}}
\newtheorem{proposition}[Pa]{{\bf Proposition}}

\begin{document}

\title{\bf Polynomial approximation in slice regular Fock spaces}
\author{
Kamal Diki \thanks{Marie Sklodowska-Curie fellow of the Istituto Nazionale di Alta Matematica} \\
Dipartimento di Matematica\\
Politecnico di Milano\\
Via Bonardi 9\\
20133 Milano, Italy\\
kamal.diki@polimi.it
\and Sorin G. Gal\\
University of Oradea\\
Department of Mathematics\\ and Computer Science\\
Str. Universitatii Nr. 1\\
410087 Oradea,
Romania \\
galso@uoradea.ro
\and Irene Sabadini\\
Dipartimento di Matematica\\
Politecnico di Milano\\
Via Bonardi 9\\
20133 Milano, Italy\\
irene.sabadini@polimi.it}

\date{}
\maketitle

\begin{abstract}
The main purpose of this paper is to prove some density results of polynomials in Fock spaces of slice regular functions. The spaces can be of two different kinds since they are equipped with different inner products and contain different functions. We treat both the cases, providing several results, some of them based on constructive methods which make use of the Taylor expansion and of the convolution polynomials. We also prove quantitative estimates in terms of higher order moduli of smoothness and in terms of the best approximation quantities.
\end{abstract}

\textbf{AMS 2010 Mathematics Subject Classification}: Primary 30G35; Secondary 30E10, 30H20.

\textbf{Keywords and phrases}: Slice regular functions, Fock space of the first kind, Fock space of the second kind, approximating polynomials, Taylor expansion, convolution polynomials, quantitative estimates, moduli of smoothness, best approximation.

\section{Introduction}

Fock spaces have been introduced in quantum mechanics via tensor products to describe the quantum states space of variables belonging to a same Hilbert space. Then, it was realized that this description correspond in fact to the Segal-Bargmann spaces, i.e. spaces of holomorphic functions in several variables which are square integrable with respect to a Gaussian measure. These spaces are important also in other settings among which we mention infinite dimensional analysis and free analysis; these latter spaces are related with the white noise space and with the theory of stochastic distributions, see \cite{dan}. For an account on the theory of Fock spaces one may consult for example the book \cite{Zhu_1}.

These spaces have been considered in some higher dimensional extensions of complex analysis, namely the analysis based on functions with values in a  Clifford algebra or, in particular, quaternions. For some recent works, we refer the reader to e.g. \cite{ppss} which is the framework of monogenic functions, to \cite{AlpayColomboSabadiniSalomon2014, DG} in the framework of slice hyperholomorphic functions and to \cite{kmnq}, which makes use of slice hyperholomorphic functions and in which the authors point out the link with the study of quantum systems with internal, discrete degrees of freedom corresponding to nonzero spins. The class of slice hyperholomorphic functions, see \cite{CSS,ColomboSabadiniStruppa2016,gss} has attracted interest in the past decade for its various applications especially in operator theory. One of its features is that it contains power series (despite what happens for other theories of hypercomplex variables) thus it is natural to consider functions which are "entire" in this class and, in particular, Fock spaces.

This paper continues the study of slice hyperholomorphic Fock spaces over the quaternions started in \cite{AlpayColomboSabadiniSalomon2014} with the purpose of providing some approximation results, specifically our goal is to extend to this setting results on the density of polynomials in the complex case. We shall show that in this context one may define two types of Fock spaces, which are called of the first and of the second kind, and for which the approximation results require different techniques.

The plan of the paper is the following: Section 2 contains some preliminary material and the definition of Fock spaces of the first and second kind.
Section 3 deals with approximation results in Fock spaces of the first kind, while Section 4 considers the case of Fock spaces of the second kind. It should be noted that the techniques used in these two sections are different. In the case of Fock spaces of the second kind we prove quantitative estimates in terms of the higher order moduli of smoothness and of the best approximation quantity. Finally, we discuss type and order of functions in the Fock spaces of the second kind.

\section{Preliminary results}
In this section, we provide the ground to show our main results that are presented.

The noncommutative field $\mathbb{H}$ of quaternions consists of elements of the form
$q=x_0  + x_{1} i +x_{2} j + x_{3} k$, $x_i\in \mathbb{R}$, $i=0,1, 2,3$,
where the imaginary units $i, j, k$ satisfy the relations
$$i^2=j^2=k^2=-1, \ ij=-ji=k, \ jk=-kj=i, \ ki=-ik=j.$$ The real number $x_0$ is called the  real part of $q$
while $ x_{1} i +x_{2} j + x_{3} k$ is called the imaginary part of $q$.
The norm of a quaternion $q$, is defined by
$|q|=\sqrt{x_0^2+x_1^2 +x_2^2+x_3^2}$.
\\
We denote by $\mathbb{B}_{r}$, the open ball in $\mathbb H$ of radius $r>0$, i.e.
 $$\mathbb{B}_{r}=\{q = x_0+i x_{1} + j x_{2} + k x_{3}, \mbox{ such that } x_0^2+x_{1}^{2}+x_{2}^{2}+x_{3}^{2}<r^{2}\}$$
while by  $\mathbb{S}$ we denote the unit sphere of purely imaginary quaternion, i.e.
$$\mathbb{S}=\{q = i x_{1} + j x_{2} + k x_{3}, \mbox{ such that } x_{1}^{2}+x_{2}^{2}+x_{3}^{2}=1\}.$$

Note that if $I\in \mathbb{S}$, then $I^{2}=-1$.  For any fixed $I\in\mathbb{S}$ we define $\mathbb{C}_I:=\{x+Iy; \ |\ x,y\in\mathbb{R}\}$, which can be identified with a complex plane.

Any non real quaternion $q$ is uniquely associated to the element $I_q\in\mathbb{S}$
given by $I_q:=( i x_{1} + j x_{2} + k x_{3})/|  i x_{1} + j x_{2} + k x_{3}|$ and $q$ belongs to the complex plane $\mathbb{C}_{I_q}$.

 We note that the real axis belongs to $\mathbb{C}_I$ for any $I\in\mathbb{S}$, so that a real quaternion is in fact associated with any imaginary unit $I$.
Furthermore, we have $\mathbb{H}=\cup_{I\in\mathbb{S}} \mathbb{C}_I$.

In the sequel we need the notion of convolution operators of a quaternion variable, and to introduce them we need a suitable notion of exponential function of quaternion variable. For every $I\in \mathbb{S}$, we consider
the following known definition for the exponential:
$e^{I t}=\cos(t)+ I \sin(t),\, \, t\in \mathbb{R}$, see \cite{GHS}.
The Euler's kind formula holds : $(\cos(t)+ I \sin(t))^{k}=\cos(k t)+ I \sin(k t)$, and therefore we have $(e^{I t})^{k}=e^{ I k t}$.

For any $q\in \mathbb{H}\setminus \mathbb{R}$, let
$r:=|q|$; then, see \cite{GHS}, there exists a unique $a\in (0, \pi)$ such that $\cos(a):=\frac{x_{1}}{r}$ and a unique $I_{q}\in \mathbb{S}$, such that
$$q=r e^{I_{q} a}, \, \mbox{ with } I_{q}=i y + j v + k s,\, y=\frac{x_{2}}{r \sin(a)},\, v=\frac{x_{3}}{r \sin(a)}, \, s=\frac{x_{4}}{r \sin(a)}.$$
Now, for $q\in \mathbb{R}$ we set $a=0$; if $q>0$ and $a=\pi$ if $q<0$, and we choose as $I_{q}$ an arbitrary fixed $I\in \mathbb{S}$.
So that if $q\in \mathbb{R}\setminus \{0\}$, then we can write $q=|q|(\cos(a)+I\sin(a))$, for a non unique $I$.
The above expression is called the trigonometric form of the quaternion number $q\not=0$.  For $q=0$, exactly as in the complex case, we do not have a trigonometric form.

In this paper we will work in the framework of  the so called slice regular functions of a quaternion variable. We define them below, also proving some main properties. More information on these functions and on their applications, can be found in \cite{CSS}, \cite{gss} and in the references therein.
\begin{Dn}\label{slice regular}
If $U$ is an open set of
$\mathbb{H}$, then a real differentiable
function $f:U \to \mathbb{H}$ is called (left) slice regular if,
for every $I \in \mathbb{S}$, its restriction $f_I$ to $U_I=U\cap\mathbb C_I$ satisfies
$$
\overline{\partial}_If(x+Iy)=\frac{1}{2}\Big(\frac{\partial}{\partial x}
+I\frac{\partial}{\partial y}\Big)f_I(x+Iy)=0.
$$
 The set of all slice regular functions on $U$ will be denoted by $\mathcal{SR}(U)$. If $U=\mathbb{H}$ then $\mathcal{SR}(\mathbb{H})$ will be called as the space of entire slice regular functions.
\end{Dn}
One of the most important properties of the class of slice regular functions, is that it contains converging power series (and polynomials) of the variable $q$ with the quaternionic coefficients written on the right.
Specifically, the following result holds.

\begin{theorem}\label{thm1} (see, e.g., Theorem 1.9 in \cite{gss}) Let $\mathbb{B}_{r}=\{q\in \mathbb{H} ; |q| < r\}$ and  $f:\ \mathbb{B}_{r}\to \mathbb{H}$. Then $f\in \mathcal{SR}(\mathbb{B}_{r})$ if and only if $f$ has a series representation of the form
$$f(q)=\sum_{n=0}^{\infty}q^{n}\frac{1}{n !}\cdot \frac{\partial^{n} f}{\partial x^{n}}(0),$$
uniformly convergent on compact sets in $\mathbb{B}_{r}$.
\end{theorem}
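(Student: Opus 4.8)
The plan is to prove both implications, the real substance lying in the necessity (``only if'') direction. For the easy ``if'' direction I would start from a series $\sum_{n=0}^{\infty} q^{n} c_{n}$ with $c_{n}=\frac{1}{n!}\frac{\partial^{n} f}{\partial x^{n}}(0)$ converging uniformly on compact subsets of $\mathbb{B}_{r}$. Restricting to a slice $\mathbb{C}_I$, each monomial $q^{n} c_{n}$ becomes $(x+Iy)^{n} c_{n}$, and a direct computation gives $\overline{\partial}_I\big((x+Iy)^{n} c_{n}\big)=0$, since the operator $\frac{1}{2}(\partial_x+I\partial_y)$ acts on the left and annihilates the $\mathbb{C}_I$-holomorphic factor $(x+Iy)^{n}$. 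Uniform convergence on compacta then lets me pass $\overline{\partial}_I$ through the sum (via a Weierstrass/Morera argument on each disc $\mathbb{B}_{r}\cap\mathbb{C}_I$), so the limit $f$ is slice regular.

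For the necessity direction I would fix $I\in\mathbb{S}$ and invoke the splitting lemma: choosing $J\in\mathbb{S}$ with $J\perp I$ one writes $\mathbb{H}=\mathbb{C}_I\oplus\mathbb{C}_I J$, so the restriction decomposes as $f_I=F+GJ$ with $F,G:\mathbb{B}_{r}\cap\mathbb{C}_I\to\mathbb{C}_I$. The equation $\overline{\partial}_I f_I=0$ is precisely the statement that $F$ and $G$ satisfy the classical Cauchy--Riemann equations on the disc $\mathbb{B}_{r}\cap\mathbb{C}_I$ of radius $r$ in $\mathbb{C}_I$. Hence I may apply the ordinary one-variable theory: $F$ and $G$ admit Taylor expansions $F(z)=\sum_{n} z^{n}\alpha_{n}$ and $G(z)=\sum_{n} z^{n}\beta_{n}$ with $\alpha_{n},\beta_{n}\in\mathbb{C}_I$, convergent on the whole disc. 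Recombining yields $f_I(z)=\sum_{n\ge 0} z^{n} c_{n}^{(I)}$ with $c_{n}^{(I)}=\alpha_{n}+\beta_{n}J\in\mathbb{H}$, where $z=x+Iy$.

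The crucial step, which I regard as the heart of the argument, is to identify these coefficients and to show they do not depend on $I$. Since $F,G$ are holomorphic, $c_{n}^{(I)}$ equals $\frac{1}{n!}$ times the $n$-th complex derivative of $f_I$ at the origin; computing that derivative along the real axis, where $z=x$ and $\partial/\partial z$ reduces to $\partial/\partial x$, gives $c_{n}^{(I)}=\frac{1}{n!}\frac{\partial^{n} f}{\partial x^{n}}(0)$. The right-hand side depends only on the behaviour of $f$ along the real axis near $0$, which is shared by every slice because $\mathbb{R}\subset\mathbb{C}_I$ for all $I$; hence $c_{n}^{(I)}=:c_{n}$ is independent of $I$. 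Consequently $f(q)=\sum_{n\ge 0} q^{n} c_{n}$ holds on each slice, and since $\mathbb{H}=\cup_{I\in\mathbb{S}}\mathbb{C}_I$ the identity holds for all $q\in\mathbb{B}_{r}$. For uniform convergence on a compact $K\subset\mathbb{B}_{r}$ I would enclose $K$ in a ball $\overline{\mathbb{B}_{\rho}}$ with $\rho<r$; since on a single slice the radius of convergence is at least $r$, Cauchy--Hadamard gives $\limsup_{n}|c_{n}|^{1/n}\le 1/r$, so $\sum_{n}|c_{n}|\rho^{n}<\infty$, and the Weierstrass $M$-test (using $|q^{n}c_{n}|\le\rho^{n}|c_{n}|$ on $K$) delivers uniform convergence across all slices simultaneously. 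The main obstacle is exactly the slice-independence of the coefficients; everything else is either the one-variable complex theory applied disc by disc or a routine convergence estimate.
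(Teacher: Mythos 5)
The paper does not prove this statement: it is quoted as a known result (Theorem 1.9 in \cite{gss}), so there is no internal proof to compare against. Your blind argument is correct and is essentially the standard proof of that cited theorem: the Splitting Lemma reduces each slice to a pair of holomorphic functions on a disc, the Taylor coefficients are identified as $\frac{1}{n!}\frac{\partial^{n}f}{\partial x^{n}}(0)$, and the genuinely quaternionic point --- that these iterated $x$-derivatives at the origin depend only on the restriction of $f$ to the real axis, which every slice shares, so the coefficients are independent of $I$ --- is exactly the crux and you handle it correctly, as you do the converse direction and the uniform convergence on compacta via Cauchy--Hadamard and the Weierstrass $M$-test.
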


Two important properties of slice regular functions that will be very useful in the sequel are, the Splitting Lemma and the Representation Formula.

\begin{La}\label{split}(see, e.g., Lemma 1.3 in \cite{gss}) Let $f$ be a slice regular function on a domain $\Omega$. Then, for every $I,J\in\mathbb{S}$ with $I\perp J$, there exist two holomorphic functions $F,G:\Omega_{I}\longrightarrow{\mathbb{C}_I}$ such that for all $z=x+Iy\in{\Omega_I}$, we have
$$f_I(z)=F(z)+G(z)J,$$
 where $\Omega_I=\Omega\cap{\mathbb{C}_I}$ and $\mathbb{C}_I=\mathbb{R}+\mathbb{R}I.$
\end{La}
Note that the representation formula for slice regular functions holds on axially symmetric slice domains. Namely, we have
\begin{Dn}
A domain $\Omega\subset \mathbb{H}$ is said to be a slice domain (or just $s$-domain) if  $\Omega\cap{\mathbb{R}}$ is nonempty and for all $I\in{\mathbb{S}}$, the set $\Omega_I:=\Omega\cap{\mathbb{C}_I}$ is a domain of the complex plane $\mathbb{C}_I$.
If moreover, for every $q=x+Iy\in{\Omega}$, the whole sphere $x+y\mathbb{S}:=\lbrace{x+Jy; \, J\in{\mathbb{S}}}\rbrace$
is contained in $\Omega$, we say that  $\Omega$ is an axially symmetric slice domain.
\end{Dn}

\begin{theorem}\label{repform}(see, e.g., Corollary 1.16 in \cite{gss})
Let $\Omega$ be an axially symmetric slice domain and $f\in{\mathcal{SR}(\Omega)}$. Then, for any $I,J\in{\mathbb{S}}$, we have the formula
$$
f(x+Jy)= \frac{1}{2}(1-JI)f_I(x+Iy)+ \frac{1}{2}(1+JI)f_I(x-Iy)
$$
for all $q=x+Jy\in{\Omega}$.
\end{theorem}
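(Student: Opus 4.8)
\emph{Proof proposal.} The plan is to show that the right-hand side, viewed as a function of $q=x+Jy$, defines a slice regular function on $\Omega$ that coincides with $f$ on the slice $\mathbb{C}_I$, and then to force global equality by the identity principle. Concretely, I would set
$$
u(x,y):=f_I(x+Iy),\qquad w(x,y):=f_I(x-Iy)=u(x,-y),
$$
for $(x,y)$ with $x\pm Iy\in\Omega_I$, and define $g:\Omega\to\mathbb{H}$ by $g(x+Jy):=\frac12(1-JI)u(x,y)+\frac12(1+JI)w(x,y)$. A short rearrangement rewrites this as $g(x+Jy)=A(x,y)+J\,B(x,y)$ with $A=\frac12(u+w)$ and $B=-\frac12 I(u-w)$; here $A$ is even and $B$ is odd in $y$, so the two ways of writing a point of the sphere $x+y\mathbb{S}$ give the same value and $B$ vanishes at real points. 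Axial symmetry of $\Omega$ guarantees $x\pm Iy\in\Omega_I$ whenever $x+Jy\in\Omega$, so $g$ is well defined, and it is real differentiable because $f_I$ is.

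The computational heart will be to check that $g$ is slice regular, i.e. that on each slice $\mathbb{C}_J$ one has $(\partial_x+J\partial_y)g_J=0$. Writing $g_J=A+JB$ and using $J^2=-1$, a direct expansion (keeping $J$ on the left, since $J$ need not commute with $A,B$) gives $(\partial_x+J\partial_y)g_J=(\partial_x A-\partial_y B)+J(\partial_x B+\partial_y A)$, so it suffices to verify the Cauchy--Riemann--type system $\partial_x A=\partial_y B$ and $\partial_x B=-\partial_y A$. These follow from the defining equation $\partial_x u=-I\partial_y u$ of slice regularity together with its reflected version $\partial_x w=I\partial_y w$, obtained by evaluating the first at $(x,-y)$ and using $w(x,y)=u(x,-y)$; substituting into $A=\frac12(u+w)$ and $B=-\frac12 I(u-w)$ and simplifying with $I^2=-1$ yields both identities. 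The hard part is exactly this step: it is conceptually trivial but error-prone because of noncommutativity, so the placement of $I$ and $J$ must be tracked carefully.

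It then remains to identify $g$ with $f$. Setting $J=I$ collapses the prefactors to $\frac12(1-I^2)=1$ and $\frac12(1+I^2)=0$, so $g(x+Iy)=u(x,y)=f_I(x+Iy)$, i.e. $g\equiv f$ on $\Omega_I$, and in particular on the real axis $\Omega\cap\mathbb{R}$. Since $g$ and $f$ are both slice regular, their difference is slice regular and vanishes on $\Omega_I$, so by the identity principle on slice domains it vanishes on all of $\Omega$, which proves the formula. I note that the identity principle needs no new input here: on $\mathbb{C}_I$ one splits $g-f$ via Lemma \ref{split} and applies the complex identity theorem, and the conclusion propagates to every other slice because all slices share the real axis, on which $g-f$ already vanishes. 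As an alternative that avoids the identity principle in the case $\Omega=\mathbb{B}_r$, I would verify the formula termwise on the Taylor expansion of Theorem \ref{thm1}: writing $(x+Iy)^n=\alpha_n+I\beta_n$ with $\alpha_n,\beta_n\in\mathbb{R}$, the same real coefficients give $(x-Iy)^n=\alpha_n-I\beta_n$ and $(x+Jy)^n=\alpha_n+J\beta_n$, and a one-line computation using $I^2=J^2=-1$ shows the identity for each monomial $q^n$, hence for $f$ by uniform convergence.
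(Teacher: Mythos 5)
Your proposal is correct. Note, however, that the paper does not prove this statement at all: it is imported verbatim from the literature (Corollary 1.16 of \cite{gss}), so there is no in-paper argument to compare against. Your argument --- define $g(x+Jy)=A(x,y)+JB(x,y)$ with $A=\frac{1}{2}(u+w)$ even and $B=-\frac{1}{2}I(u-w)$ odd in $y$, verify the Cauchy--Riemann system $\partial_x A=\partial_y B$, $\partial_x B=-\partial_y A$ on every slice, observe $g=f$ on $\Omega_I$, and conclude by the identity principle (or, for power series, check the identity monomial by monomial) --- is essentially the standard proof of the Representation Formula found in the cited reference, with the noncommutativity handled correctly; the only point you assert rather than prove is real differentiability of $g$ at real points, which your parity observations on $A$ and $B$ already justify.
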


According to \cite{CGLSS}, in the framework of slice regular functions, can be considered two kinds of (weighted) Bergman spaces. In the recent paper
\cite{gasa3}, the properties of density for quaternionic polynomials in these kinds of spaces were obtained. Let us mention here that in the complex case, convolution polynomials were used to obtain constructive approximation results in complex Bergman spaces, see \cite{gal1}.

In this paper we continue this type of study for the quaternionic slice regular functions in the so-called Fock spaces.
To this purpose, it is worthwhile to provide some background on Fock spaces in the complex case (see, e.g., \cite{Zhu_1}).

\begin{Dn} (see, e.g., \cite{Zhu_1}, p. 36) Let $0 < p < \infty$ and $\alpha>0$. The Fock space $F^{p}_{\alpha}(\mathbb{C})$ is defined as the space of all entire functions in $\mathbb{C}$ with the property that $\displaystyle\frac{\alpha p}{2\pi}\int_{\mathbb{C}}\left |f(z)e^{-\alpha |z|^{2}|/2}\right |^{p}d A(z)<+\infty$, where $d A(z)=d x d y=r d r d \theta$, $z=x+i y=re^{i \theta}$, is the area measure in the complex plane.
\end{Dn}
\begin{Rk}{\rm Endowed with
$$\|f\|_{p, \alpha}^p=\displaystyle\frac{\alpha p}{2\pi} \int_{\mathbb{C}}\left |f(z)e^{-\alpha |z|^{2}/2}\right |^{p}d A(z),$$
it is known (see, e.g., \cite{Zhu_1}, p. 36) that $F_{\alpha}^{p}$ is a Banach space for $1\le p <\infty$, and a complete metric space for
$\|\cdot\|^{p}_{p, \alpha}$ with $0<p<1$. Also, if $p=+\infty$, then  endowed with $\|f\|_{\infty, \alpha}=esssup\{|f(z)|e^{-\alpha |z|^{2}|/2} ; z\in \mathbb{C}\}$, $F^{\infty}_{\alpha}$ is a Banach space.}
\end{Rk}
\begin{Rk} {\rm Concerning the approximation by polynomials in Fock spaces, qualitative results without any quantitative estimates were obtained. For any $0<p<\infty$, and $f\in F_{\alpha}^{p}$, there exists a polynomial sequence $(P_{n})_{n\in \mathbb{N}}$ such that $\lim_{n\to \infty}\|f-P_{n}\|_{p, \alpha}=0$ (see, e.g., Proposition 2.9, p. 38 in \cite{Zhu_1}). The proof of the result is not constructive and consists in two steps : at step 1, one approximates $f(z)$ by its dilations $f(r z)$ with $r\to 1^{-}$ and at step 2 one approximates each $f_{r}$ by its attached Taylor polynomials. If $1<p<\infty$, then one can construct $P_{n}$ as the Taylor polynomials attached to $f$ (see, e.g., Exercise 5, p. 89 in \cite{Zhu_1}) but if $0<p\le 1$, then there exists $f\in F_{\alpha}^{p}$ which cannot be approximated by its associated Taylor polynomials (see, e.g., Exercise 6, p. 89 in \cite{Zhu_1}). However, if $f\in F_{\alpha}^{\infty}$ is such that $\lim_{z\to \infty}f(z)e^{\alpha |z|^{2}/2}=0$, then
$f$ can be approximated by polynomials in the norm $\|\cdot \|_{\infty, \alpha}$ (see, e.g., Exercise 8, p. 89 in \cite{Zhu_1}).}
\end{Rk}
We can now introduce the quaternionic counterpart of Fock spaces, beginning with the following definition that, in this generality,  has not been previously considered in the literature.
\begin{Dn}
Let $0<p<+\infty$ and $0<\alpha<+\infty$. The Fock space of the first kind $\mathcal{F}^{p}_{\alpha}(\mathbb{H})$ is defined as the space of entire slice regular functions $f\in \mathcal{SR}(\mathbb{H})$, such that
$$\|f\|_{p, \alpha}^p:=\displaystyle\left(\frac{\alpha p}{2\pi}\right)^2\int_{\mathbb{H}}|f(q)|^{p}(e^{-\alpha|q|^{2}/2})^{p}d m(q)<+\infty,$$
where $d m(q)$ represents the Lebesgue volume element in $\mathbb{R}^{4}$.
\end{Dn}
\begin{Rk}{\rm
  Using standard techniques, like in the complex case, one can prove that for $1\le p <+\infty$, $\|\cdot\|_{p, \alpha}$ is a norm, while for $0<p<1$, $\|f-g\|_{p, \alpha}^{p}$ is a quasi-norm.
}
\end{Rk}
Before to define the Fock spaces of the second kind, we give the following:
\begin{Dn}
For $I\in \mathbb{S}$, $0<\alpha <+\infty$ and $0<p<+\infty$, let us denote
$$
 \|f\|_{p, \alpha, I}^p=\displaystyle\frac{\alpha p}{2\pi}\int_{\mathbb{C}_{I}}|f(q)|^{p}(e^{-\alpha |q|^{2}/2})^{p}d m_{I}(q),
$$
 with $d m_{I}(q)$ representing the area measure on $\mathbb{C}_{I}$.
\\
The space of all entire functions $f$ satisfying $\|f\|_{p, \alpha, I}<+\infty$ will be denoted with $\mathcal{F}^{p}_{\alpha, I}(\mathbb{H})$.
\end{Dn}
Now, we are in position to introduce the following:
\begin{Dn}\label{Berg_2}
The Fock space of the second kind $\mathcal{F}^{\alpha, p}_{Slice}(\mathbb{H})$ consists in all $f\in \mathcal{SR}(\mathbb{H})$ having the property that for some $I\in \mathbb{S}$ it follows $f\in \mathcal{F}^{p}_{\alpha, I}(\mathbb{H})$. In order to make the norm independent of the choice of the imaginary unit, we set $$\|f\|_{\mathcal{F}^{\alpha, p}_{Slice}(\mathbb{H})}=\underset{I\in\mathbb{S}} \sup \|f\|_{p, \alpha, I}.$$
\end{Dn}

\section{Polynomial approximation in Fock spaces of the first kind}
In the sequel, we consider the quaternionic Fock spaces of the first kind $\mathcal{F}^{p}_{\alpha}(\mathbb{H})$ introduced in Section 2. First, we start by proving the following estimate:

\begin{La} \label{EEH}
Let $f\in\mathcal{F}^{p}_{\alpha}(\mathbb{H})$. Then, there exists a constant $c>0$ such that for all $q\in\mathbb{H}$, we have $$|f(q)|\leq ce^{\frac{\alpha}{2}|q|^2}\|f\|_{p, \alpha},$$
where $c=4\displaystyle\left(\frac{2\pi}{\alpha p}\right)^\frac{1}{p}$.
\end{La}
\begin{proof}
Let $I\in \mathbb{S}$, since $f$ is slice regular on $\mathbb{H}$ , then making use of the Splitting Lemma  we have that for all $z\in\mathbb{C}_I$,
$$f_I(z)=F(z)+G(z)J,$$ where $J\in\mathbb{S}$ is orthogonal to $I$, and $F$, $G$ are two holomorphic functions on the slice $\mathbb{C}_I$. Note that since $f\in\mathcal{F}^{p}_{\alpha}(\mathbb{H})$ it is easy to show that $F$ and $G$ belong to the classical Fock space $\mathcal{F}^{p}_{\alpha}(\mathbb{C}_I)$. Thus, by the classical complex analysis the following estimates are satisfied for any $z\in\mathbb{C}_I$ $$|F(z)|\leq e^{\frac{\alpha}{2}|z|^2}\Vert{F}\Vert_{\mathcal{F}^p_\alpha(\mathbb{C}_I)} \text{ and } |G(z)|\leq e^{\frac{\alpha}{2}|z|^2}\Vert{G}\Vert_{\mathcal{F}^p_\alpha(\mathbb{C}_I)}.$$ Then, \[
\begin{split}
|f(z)|&\le |F(z)|+|G(z)|\\
&\le
e^{\frac{\alpha}{2}|z|^2}(\Vert{F}\Vert_{\mathcal{F}^p_\alpha(\mathbb{C}_I)}+\Vert{G}\Vert_{\mathcal{F}^p_\alpha(\mathbb{C}_I)}).
\end{split}
\]
However, since $|F(z)|\le |f(z)|$ for any $z\in\mathbb{C}_I$, we have \[ \begin{split}
 \Vert{F}\Vert_{\mathcal{F}^p_\alpha(\mathbb{C}_I)}^p& = \displaystyle\frac{\alpha p}{2\pi}\int_{\mathbb{C}_{I}}|F(z)|^{p}(e^{-\alpha |z|^{2}/2})^{p}d m_{I}(z)\\
 &\le \displaystyle\frac{\alpha p}{2\pi}\int_{\mathbb{C}_I}|f(z)|^{p}(e^{-\alpha |z|^{2}/2})^{p}d m_I(z)\\
&\le \displaystyle\frac{\alpha p}{2\pi}\int_{\mathbb{H}}|f(q)|^{p}(e^{-\alpha |q|^{2}/2})^{p}d m(q)\\
& = \displaystyle\frac{2\pi}{\alpha p} \|f\|_{p, \alpha}^p .
\end{split}
\]

By similar arguments we get also $ \Vert{G}\Vert_{\mathcal{F}^p_\alpha(\mathbb{C}_I)} \le \displaystyle\left(\frac{2\pi}{\alpha p}\right)^\frac{1}{p}\|f\|_{p, \alpha}$. So, for any $z\in\mathbb{C}_I$ we have the following estimate
$$|f(z)|\le 2\displaystyle\left(\frac{2\pi}{\alpha p}\right)^\frac{1}{p}e^{\frac{\alpha}{2}|z|^2}\|f\|_{p, \alpha}.$$

Finally, for $q=x+Jy\in\mathbb{H}$ by the Representation Formula  we have $$f(q)=\displaystyle \frac{1}{2}\left[f(z)+f(\overline{z})\right]+J\frac{I}{2}\left[f(\overline{z})-f(z)\right]; z=x+Iy\in\mathbb{C}_I$$

Thus,
$$|f(q)|\leq |f(z)|+|f(\overline{z})|.$$
Hence, the last inequality combined with the estimate on $\mathbb{C}_I$ give $$|f(q)|\le 4\displaystyle\left(\frac{2\pi}{\alpha p}\right)^\frac{1}{p}e^{\frac{\alpha}{2}|q|^2}\|f\|_{p, \alpha},$$ for all $q\in\mathbb{H}$.
\end{proof}

Now, we present the main result in this setting concerning the polynomial approximation.
\begin{theorem}\label{thm_first_kind} Let $\alpha > 0$ and $0<p<\infty$. The set of all quaternionic polynomials is included in $\mathcal{F}^{p}_{\alpha}(\mathbb{H})$ and for every $f\in \mathcal{F}^{p}_{\alpha}(\mathbb{H})$, there exists a sequence of quaternionic polynomials $(p_{n})_{n\in \mathbb{N}}$ with $\|p_{n}-f\|_{p, \alpha}\to 0$ as $n\to +\infty$.
\end{theorem}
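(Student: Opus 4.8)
The statement has two parts, and I would dispose of the membership claim first. A quaternionic polynomial $P(q)=\sum_{n=0}^{N}q^{n}a_{n}$ satisfies $|P(q)|\le\sum_{n=0}^{N}|a_{n}|\,|q|^{n}\le C(1+|q|^{N})$, and since the Gaussian weight $e^{-\alpha p|q|^{2}/2}$ decays faster than any power grows, the integral $\int_{\mathbb{H}}(1+|q|^{N})^{p}e^{-\alpha p|q|^{2}/2}\,dm(q)$ converges (pass to $4$-dimensional polar coordinates and recognize a Gamma integral). Hence $P\in\mathcal{F}^{p}_{\alpha}(\mathbb{H})$. For the density part, since for $0<p\le 1$ the Taylor polynomials of $f$ itself need not converge (as recalled for the complex case in the Remark preceding Definition \ref{Berg_2}), I would use the two-step dilation scheme. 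For $0<t<1$ set $f_{t}(q):=f(tq)\in\mathcal{SR}(\mathbb{H})$; the plan is (i) to show $\|f_{t}-f\|_{p,\alpha}\to 0$ as $t\to 1^{-}$, and (ii) for each fixed $t<1$ to approximate $f_{t}$ by its Taylor partial sums in $\|\cdot\|_{p,\alpha}$. Combining, given $\varepsilon$ one chooses $t$ by (i) and then a partial sum by (ii).

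For step (i) the change of variables $w=tq$ (Jacobian $t^{4}$) together with $e^{-\alpha p|w|^{2}/(2t^{2})}\le e^{-\alpha p|w|^{2}/2}$ gives $\|f_{t}\|_{p,\alpha}\le t^{-4/p}\|f\|_{p,\alpha}$, so dilations are uniformly bounded for $t$ near $1$; the same computation restricted to $|q|>R$ yields $\int_{|q|>R}|f_{t}|^{p}e^{-\alpha p|q|^{2}/2}\,dm\le t_{0}^{-4}\int_{|w|>t_{0}R}|f|^{p}e^{-\alpha p|w|^{2}/2}\,dm$, which is small uniformly in $t\in[t_{0},1)$ once $R$ is large, being the tail of a convergent integral. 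On the ball $|q|\le R$ one has $f_{t}\to f$ uniformly since $f$ is continuous, so that part of the integral tends to $0$. Splitting the integral at $|q|=R$ (and using subadditivity of $\|\cdot\|_{p,\alpha}^{p}$ when $0<p<1$) gives $\|f_{t}-f\|_{p,\alpha}\to 0$.

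For step (ii) I would write the Taylor expansion from Theorem \ref{thm1} as $f(q)=\sum_{n\ge 0}q^{n}a_{n}$, so $f_{t}(q)=\sum_{n\ge 0}q^{n}t^{n}a_{n}$ with partial sums $S_{N}$, which are quaternionic polynomials. The crucial point is that dilation moves $f_{t}$ into a Fock space with strictly smaller parameter: the change of variables above shows $f_{t}\in\mathcal{F}^{p}_{\beta}(\mathbb{H})$ with $\beta=\alpha t^{2}<\alpha$ and $\|f_{t}\|_{p,\beta}=\|f\|_{p,\alpha}$. I would then combine two ingredients. First, computing monomial norms as Gamma integrals in $4$- and in $2$-dimensional polar coordinates yields $\|q^{n}\|_{p,\alpha}\asymp(\beta/\alpha)^{n/2}n^{1/p}\|z^{n}\|_{p,\beta,I}=t^{n}n^{1/p}\|z^{n}\|_{p,\beta,I}$, i.e. the passage from parameter $\beta$ to $\alpha$ contributes a geometric factor $t^{n}$ up to polynomial corrections. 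Second, a coefficient estimate $|t^{n}a_{n}|\,\|z^{n}\|_{p,\beta,I}\le C\,\|f_{t}\|_{p,\beta}$, obtained on a fixed slice $\mathbb{C}_{I}$ by extracting the coefficient through the rotation average $\frac{1}{2\pi}\int_{0}^{2\pi}e^{-In\theta}(f_{t})_{I}(e^{I\theta}z)\,d\theta=z^{n}(t^{n}a_{n})$ and applying Minkowski's integral inequality together with the rotation invariance of the slice Gaussian measure (the slice norm being controlled by the full one as in the proof of Lemma \ref{EEH}). Combining these, for $p\ge 1$,
\[
\Big\|\sum_{n>N}q^{n}t^{n}a_{n}\Big\|_{p,\alpha}\le\sum_{n>N}|t^{n}a_{n}|\,\|q^{n}\|_{p,\alpha}\lesssim \|f_{t}\|_{p,\beta}\sum_{n>N}t^{n}n^{1/p},
\]
which tends to $0$ as $N\to\infty$ because $t<1$; for $0<p<1$ one runs the same estimate on $\|\cdot\|_{p,\alpha}^{p}$ using subadditivity, handling the coefficient bound as in the complex case.

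I expect step (ii) to be the main obstacle. One must produce a coefficient estimate that is sharp enough, crucially with no superpolynomial loss in $n$, so that the geometric gain $t^{n}$ from dilating into the smaller space $\mathcal{F}^{p}_{\beta}$ actually forces summability. A naive bound drawn from the pointwise growth estimate of Lemma \ref{EEH} loses a factor that becomes too large when $p$ is big, whereas the rotation-averaging extraction on a single slice (immediate for $p\ge 1$, adaptable for $0<p<1$) keeps the constant uniform in $n$. The reduction to one slice via the Splitting Lemma and the comparison of the $4$-dimensional norm with slice norms are the technical glue tying the two steps together.
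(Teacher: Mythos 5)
Your proposal is correct in outline, and Step~1 (dilations $f_t\to f$ in norm, proved by a uniform tail estimate plus uniform convergence on compacts) is essentially the paper's Step~1, which instead invokes the dominated-convergence lemma of Hedenmalm--Korenblum--Zhu. Your Step~2, however, is genuinely different. The paper picks $\alpha r^2<\beta<\alpha$, uses the growth estimate of Lemma~\ref{EEH} to place $f_r$ in the Hilbert space $\mathcal{F}^{2}_{\beta}(\mathbb{H})$ and to obtain the continuous embedding $\mathcal{F}^{2}_{\beta}(\mathbb{H})\hookrightarrow\mathcal{F}^{p}_{\alpha}(\mathbb{H})$, and then argues softly: the monomials are total in $\mathcal{F}^{2}_{\beta}(\mathbb{H})$, so Gram--Schmidt produces a total orthonormal family of polynomials approximating $f_r$, and the embedding transfers this to $\|\cdot\|_{p,\alpha}$. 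You instead estimate the Taylor tail of $f_t$ directly, trading the parameter shift $\beta=\alpha t^2$ for the geometric factor $t^{n}$ in the monomial-norm comparison $\|q^{n}\|_{p,\alpha}\asymp t^{n}n^{1/p}\|z^{n}\|_{p,\beta,I}$ and absorbing $\|z^{n}\|_{p,\beta,I}$ into a rotation-averaged coefficient bound. Your route is constructive (the approximants are explicit partial sums of $f_t$) and, notably, it does not presuppose that the monomials are total in the first-kind Hilbert space $\mathcal{F}^{2}_{\beta}(\mathbb{H})$ --- a point the paper merely asserts as ``clear'' even though it is essentially the $p=2$ instance of the theorem being proved; the paper's route avoids all coefficient and Gamma-function asymptotics. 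Two places where you should be explicit: the comparison $\|g\|_{p,\beta,I}\le C\,\|g\|_{p,\beta}$ of a two-dimensional slice integral with the four-dimensional one is not a pointwise domination (the paper states it without a constant inside the proof of Lemma~\ref{EEH}, where it is already delicate because the volume element carries a factor $y^{2}$ vanishing on the real axis) and needs the representation formula plus a subharmonicity or mean-value argument; and for $0<p<1$, where Minkowski's integral inequality fails, the coefficient bound should be run through the pointwise estimate $|a_n|\le c\,(e\beta/n)^{n/2}\|f_t\|_{p,\beta}$, which by Stirling loses only a polynomial factor in $n$ against $\|z^{n}\|_{p,\beta,I}$ and is therefore still summable against $t^{np}$. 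With those two points filled in, your argument closes and in fact yields slightly more than the paper's statement.
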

\begin{proof} First of all, we observe that any quaternionic polynomial belongs to $\mathcal{F}^{p}_{\alpha}(\mathbb{H})$. This follows easily from the fact that for any $k=0, 1, ....$, we have
$$\int_{\mathbb{H}}|q^{k}|^{p}(e^{-\alpha|q|^{2}/2})^{p}d m(q) <+\infty.$$

We then divide the proof in two steps.

{\bf Step 1.} Let $0<r<1$, $f\in \mathcal{F}^{p}_{\alpha}(\mathbb{H})$ and define $f_{r}(q)=f(r q)$. Evidently $f_r$ is an entire slice regular function.

Firstly, we will prove that $\lim_{r\to 1^{-}}\|f_r - f\|_{p, \alpha}=0$. We will reason similar to the complex case in the proof of Proposition 2.9, p. 38 in \cite{Zhu_1}, taking into account that $f:\mathbb{H}\to \mathbb{H}$ can be written componentwise as
$$f(q)=f_1(x_1, x_2, x_3, x_4)+i f_2(x_1, x_2, x_3, x_4)+j f_3(x_1, x_2, x_3, x_4)+ k f_4(x_1, x_2, x_3, x_4),$$
$q=x_1+i x_2 + j x_3 + k x_4$ and that applying the Lemma 3.17, p. 66 in \cite{HKZ} to $f$ is equivalent to apply it to each real-valued function of four real variables $f_{k}(x_1, x_2, x_3, x_4)$, $k=1, 2, 3, 4$.

By using the componentwise form, since $f$ is entire slice regular it follows that it is continuous on $\mathbb{H}$ and it is immediate that $\lim_{r\to 1^{-1}}f(rq)=f(q)$, for all $q\in \mathbb{H}$.

Now, for $f\in \mathcal{F}^{p}_{\alpha}(\mathbb{H})$,  changing the variable $r q=w$ and taking into account that as in the proof of Theorem 2.1 in \cite{gasa3}, we have $d m(q)=\frac{1}{r^{4}}d m(w)$, we obtain
\[
\begin{split}
\|f_{r}\|^{p}_{p, \alpha}&= \displaystyle\left(\frac{\alpha p}{2\pi}\right)^2\int_{\mathbb{H}}|f(r q) e^{-\alpha |q|^{2}/2}|^{p}d m(q)\\
&=\displaystyle\left(\frac{\alpha p}{2\pi}\right)^2\frac{1}{r^{4}}\cdot \int_{\mathbb{H}}|f(w)e^{-\alpha |w|^{2}/2}|^{p}\cdot e^{-p \alpha |w|^{2}(r^{-2}-1)/2} d m(w).
\end{split}
\]

Since for all $w\in \mathbb{H}$ and $0<r<1$ we have $e^{-p \alpha |w|^{2}(r^{-2}-1)/2}\le 1$, by applying the dominated  convergence theorem in the above mentioned Lemma 3.17 in \cite{HKZ}, we are lead to $\lim_{r\to 1^{-1}}\|f_r - f\|_{p, \alpha}=0$.

{\bf Step 2.} The proof is terminated if we can show that for every $r\in (0, 1)$, the function
$f_{r}$ can be approximated by some quaternionic polynomials in the norm topology of $\mathcal{F}^{p}_{\alpha}(\mathbb{H})$. To this end, let $0<r<1$ and $\alpha r^2<\beta< \alpha$. On one hand, note that $f_r$ is slice regular on $\mathbb{H}$. Moreover,  according to Lemma \ref{EEH} there exists $c>0$ such that for any $q\in\mathbb{H}$ we have $$|f_r(q)|=|f(rq)|\leq ce^{\frac{\alpha}{2}r^2|q|^2}\|f\|_{p, \alpha}.$$ Thus, since $\alpha r^2-\beta<0$ we get $$\displaystyle \int_\mathbb{H}|f_r(q)|^2e^{-\beta|q|^2}dm(q)\leq c^2\|f\|_{p, \alpha}^2\int_\mathbb{H}e^{(\alpha r^2-\beta)|q|^2}dm(q)< \infty. $$ In particular, this shows that $f_r$ belongs to $\mathcal{F}^{2}_{\beta}(\mathbb{H})$. Furthermore, since $\beta-\alpha<0$ we can see also that  $\mathcal{F}^{2}_{\beta}(\mathbb{H})$ is continuously embedded in $\mathcal{F}^{p}_{\alpha}(\mathbb{H})$. Indeed, for $h\in\mathcal{F}^{2}_{\beta}(\mathbb{H}),$ applying Lemma \ref{EEH} there exists  $C>0,$ such that for any $q\in\mathbb{H}$, we have  $$|h(q)|\leq C e^{\frac{\beta}{2}|q|^2}\|h\|_{2, \beta}.$$  Thus, $$\displaystyle \int_\mathbb{H}|h(q)|^pe^{-\frac{\alpha p}{2}|q|^2}dm(q)\leq C^p \|h\|_{2, \beta}^p\int_\mathbb{H}e^{\frac{(\beta-\alpha) p}{2}|q|^2}dm(q)< \infty.$$ Hence, this shows that  $\|h\|_{p, \alpha}\leq K\|h\|_{2, \beta}$ where $K=K(\alpha,\beta,p)>0$. On the other hand, it is clear that the quaternionic monomials $(q^n)_n$ are contained and generate any element of the quaternionic Hilbert space $\mathcal{F}^{2}_{\beta}(\mathbb{H})$ but they do not form an orthogonal basis of the Hilbert Fock space of the first kind. So, using the orthonormalization process we can obtain an orthonormal total family $(p_n)_n$ of quaternionic polynomials in $\mathcal{F}^{2}_{\beta}(\mathbb{H})$. Therefore, $f_r$ can be approximated by $(p_n)_n$ since $f_r\in\mathcal{F}^{2}_{\beta}(\mathbb{H})$. Moreover,there exists $K>0$ such that $$\|f_r-p_n\|_{p, \alpha}\leq K\|f_r-p_n\|_{2, \beta}.$$ Finally, the previous inequality shows that $f_{r}$ can be approximated by a sequence of quaternionic polynomials in the norm topology of $\mathcal{F}^{p}_{\alpha}(\mathbb{H})$. This ends the proof.

\end{proof}
\begin{Rk} {\rm The approximation in Fock spaces of the first kind is not based on the Taylor expansion since the quaternionic monomials do not form an orthogonal basis of the Hilbert Fock space of the first kind.}
\end{Rk}

\section{Polynomial approximation in Fock spaces of the second kind}

In this section we prove that polynomials are dense in Fock spaces of the second kind. Furthermore, we prove a result with quantitative estimates in terms of higher order moduli of smoothness and of the best approximation quantity.
\\
To this goal, we first need to prove some technical results. The following proposition has a rather standard proof that we write for the sake of completeness.

\begin{proposition}\label{equiva}  Let $p\geq 1$ (resp. $0<p<1$) and $\|\cdot \|_{p, \alpha, I}$ be the norm (resp. quasi-norm) in $\mathcal{F}^{ p}_{\alpha, I}(\mathbb{H})$. Then $\|\cdot \|_{p, \alpha, I}$ and $\|\cdot \|_{p, \alpha, J}$ are equivalent for any $I,J\in\mathbb S$.
\end{proposition}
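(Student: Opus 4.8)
The plan is to show that for any two imaginary units $I,J\in\mathbb S$, the two quantities $\|f\|_{p,\alpha,I}$ and $\|f\|_{p,\alpha,J}$ control each other by a fixed constant, using the Representation Formula (Theorem~\ref{repform}) to pass from the slice $\mathbb C_J$ back to the slice $\mathbb C_I$. The key observation is that for a slice regular entire function, the restriction $f_J$ on an arbitrary slice is completely determined by the restriction $f_I$ on a single reference slice, and the passage is an integral-free pointwise identity with bounded coefficients. Since $|q|$ depends only on $x$ and $y$ (not on the imaginary unit), the Gaussian weight $e^{-\alpha|q|^2/2}$ is transported isometrically under the change of slice, so only the factor $|f|^p$ needs to be compared.

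First I would fix $I,J\in\mathbb S$ and write, for $q=x+Jy\in\mathbb C_J$, the Representation Formula
$$
f(x+Jy)=\frac12(1-JI)f_I(x+Iy)+\frac12(1+JI)f_I(x-Iy).
$$
Taking moduli and using that $|1\pm JI|\le 2$ (indeed $|JI|=1$), I obtain a pointwise bound of the form $|f(x+Jy)|\le |f_I(x+Iy)|+|f_I(x-Iy)|$. The crucial point is that the map $(x,y)\mapsto(x,y)$ identifying $\mathbb C_J$ with $\mathbb C_I$ (as copies of $\mathbb R^2$) preserves the area measure $dm_J=dm_I=dx\,dy$ and preserves $|q|^2=x^2+y^2$, so under this identification
$$
\int_{\mathbb C_J}|f(q)|^p(e^{-\alpha|q|^2/2})^p\,dm_J(q)
\le \int_{\mathbb C_I}\bigl(|f_I(x+Iy)|+|f_I(x-Iy)|\bigr)^p(e^{-\alpha(x^2+y^2)/2})^p\,dx\,dy.
$$
Then I would invoke the elementary inequality $(a+b)^p\le 2^{p-1}(a^p+b^p)$ for $p\ge1$ (resp.\ $(a+b)^p\le a^p+b^p$ for $0<p<1$), together with the symmetry of the Gaussian weight under $y\mapsto -y$, to conclude that the right-hand side is bounded by a constant multiple of $\|f\|_{p,\alpha,I}^p$. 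This yields $\|f\|_{p,\alpha,J}\le C_p\,\|f\|_{p,\alpha,I}$ with an explicit $C_p$ depending only on $p$. By symmetry in $I$ and $J$ the reverse inequality holds with the same constant, giving equivalence.

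I expect the only genuine obstacle to be bookkeeping rather than conceptual: one must be careful that the Representation Formula as stated requires $f$ to be slice regular on an axially symmetric slice domain, which is satisfied here since $f$ is entire slice regular and $\mathbb H$ is axially symmetric, and one must verify that the change-of-variable identification of $\mathbb C_J$ with $\mathbb C_I$ really is measure-preserving and weight-preserving. Both of these are immediate once it is noted that $|x+Jy|^2=x^2+y^2$ independently of the unit, so the Gaussian and the area element transfer without any Jacobian. The split into the two regimes $p\ge1$ and $0<p<1$ only affects which form of the subadditivity inequality for $(a+b)^p$ is used, and hence the precise value of the equivalence constant; the structure of the argument is identical in both cases.
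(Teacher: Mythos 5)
Your proposal is correct and follows essentially the same route as the paper: the Representation Formula gives the pointwise bound $|f(x+Jy)|\le |f_I(x+Iy)|+|f_I(x-Iy)|$, the inequalities $(a+b)^p\le 2^{p-1}(a^p+b^p)$ for $p\ge 1$ and $(a+b)^p\le a^p+b^p$ for $0<p<1$ handle the two regimes, and the observation that $|x+Iy|^2=|x+Jy|^2=x^2+y^2$ together with $dm_I=dm_J=dx\,dy$ transfers the weighted integral between slices. The reverse inequality by symmetry in $I$ and $J$ is also exactly how the paper concludes.
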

\begin{proof}
From the representation formula we easily get
$$|f(x+y I)|\le |f(x+y J)|+|f(x-y J)|.$$
Then, by taking $| .|^p$ in the above formula,
and using the inequalities  $(a+b)^{p}\le 2^{p-1}(a^{p}+b^{p})$, if $1\le p <+\infty$,
and  $(a+b)^{p}\le a^{p}+b^{p}$, if $0<p<1$, for all $a, b \geq 0$ we obtain
$$|f(x+y I)|^{p}\le 2^{p-1} \left [|f(x+y J)|^{p}+|f(x-y J)|^{p}\right ], \mbox{ if } 1\le p <\infty$$
and
$$|f(x+y I)|^{p}\le [|f(x+y J)|^{p}+|f(x-y J)|^{p}], \mbox{ if } 0 < p < 1.$$
Multiplying both terms in the above inequalities with $e^{-p \alpha |x+y I|^{2}/2}$, integrating on $\mathbb{C}_{I}$ with respect to $d m_{I}(q)$, then multiplying the corresponding obtained inequality with $e^{-p \alpha |x+y J|^{2}/2}$, integrating on $\mathbb{C}_{J}$ with respect to $d m_{J}(q)$  and taking into account that $|x+y I|^{2}=|x+y J|^{2}=|x-y J|^{2}=x^{2}+y^{2}$, we obtain an inequality of the form
$\|f\|_{p, \alpha, I}\le C_{p}\|f\|_{p, \alpha, J}$,
with $C_{p}$ independent of $I$ and $J$.

Interchanging now $I$ with  $J$ and repeating the above reasonings, we get
the desired conclusion.
\end{proof}
\begin{Cy}
Given any $I,J\in\mathbb{S}$ the slice hyperholomorphic Fock spaces  $\mathcal{F}^{ p}_{\alpha, I}(\mathbb{H})$ and $\mathcal{F}^{ p}_{\alpha, J}(\mathbb{H})$  contain the same elements and have equivalent norms.
\end{Cy} \begin{Rk} {\rm The notion of Fock space of the second kind given in Definition \ref{Berg_2} is independent of the choice of the imaginary unit, and this justifies the notation $\mathcal{F}^{\alpha, p}_{Slice}(\mathbb{H})$.}
\end{Rk}
\begin{La} \label{GC}
Let $0<p<\infty, \alpha >0$ and $f\in\mathcal{F}^{\alpha, p}_{Slice}(\mathbb{H})$. Then, for any $q\in\mathbb{H}$ we have $$|f(q)|\leq 4e^{\frac{\alpha}{2}|q|^2}\Vert{f}\Vert_{\mathcal{F}^{\alpha, p}_{Slice}(\mathbb{H})}.$$
\end{La}
\begin{proof}
Let $f\in \mathcal{F}^{\alpha, p}_{Slice}(\mathbb{H})$ and let $I\in \mathbb{S}$. Then, choose  $J$ in $\mathbb{S}$ perpendicular to $I$. In particular, $f$ is slice regular on $\mathbb{H}$, then by the Splitting Lemma there exist $F,G:\mathbb{C}_I\longrightarrow \mathbb{C}_I$ two holomorphic functions such that we have $$f_I(z)=F(z)+G(z)J; \text{ } \forall z\in\mathbb{C}_I.$$ Then, we use similar arguments as in the Lemma \ref{EEH} to see that for any $z\in\mathbb{C}_I$, we have \[
\begin{split}
|f(z)|
&\le
e^{\frac{\alpha}{2}|z|^2}(\Vert{F}\Vert_{\mathcal{F}^p_\alpha(\mathbb{C}_I)}+\Vert{G}\Vert_{\mathcal{F}^p_\alpha(\mathbb{C}_I)}).
\end{split}
\]

However, note that $$\Vert{F}\Vert_{\mathcal{F}^p_\alpha(\mathbb{C}_I)}\leq \Vert{f}\Vert_{\mathcal{F}^{\alpha, p}_{Slice}(\mathbb{H})} \text{ and } \Vert{G}\Vert_{\mathcal{F}^p_\alpha(\mathbb{C}_I)}\leq \Vert{f}\Vert_{\mathcal{F}^{\alpha, p}_{Slice}(\mathbb{H})}.$$

Thus, for any $z\in\mathbb{C}_I$ we get $$|f(z)| \le 2 e^{\frac{\alpha}{2}|z|^2}\Vert{f}\Vert_{\mathcal{F}^{\alpha, p}_{Slice}(\mathbb{H})}.$$

Finally, we  apply the Representation Formula in order to prove the estimate for any $q\in\mathbb{H}$ and this completes the proof.
\end{proof}
Now, we can state and prove the first main result of this section.
\begin{theorem}\label{thm_second_kind} Let $0<p<+\infty$, $0<\alpha<+\infty$ and $f\in \mathcal{F}^{\alpha, p}_{Slice}(\mathbb{H})$. There exists a polynomial sequence $(P_{n})_{n\in \mathbb{N}}$ such that for any $I\in \mathbb{S}$ it follows $\|P_{n}-f\|_{p, \alpha, I}\to 0$ as $n\to +\infty$.
\end{theorem}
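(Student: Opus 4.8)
The plan is to mirror the two-step structure of the proof of Theorem \ref{thm_first_kind}, but to exploit the fact that on a single slice the situation collapses to the classical complex Fock space, where the monomials $(z^k)_k$ do form an orthogonal system. By Proposition \ref{equiva} the (quasi-)norms $\|\cdot\|_{p,\alpha,I}$ are mutually equivalent, so it is enough to build one polynomial sequence $(P_n)$ converging in $\|\cdot\|_{p,\alpha,I_0}$ for a single fixed $I_0\in\mathbb{S}$; the statement for every $I\in\mathbb{S}$ is then immediate. Fix $I=I_0$ and choose $J\in\mathbb{S}$ with $J\perp I$. By Lemma \ref{split}, $f_I(z)=F(z)+G(z)J$ with $F,G$ holomorphic on $\mathbb{C}_I$, and since $\{1,I,J,IJ\}$ is an orthonormal basis one has $|f_I(z)|^2=|F(z)|^2+|G(z)|^2$; hence $F,G$ belong to the classical Fock space $F^p_\alpha(\mathbb{C}_I)$ and $\|\cdot\|_{p,\alpha,I}$ is comparable to $\|F\|_{F^p_\alpha(\mathbb{C}_I)}+\|G\|_{F^p_\alpha(\mathbb{C}_I)}$.

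Step 1 (dilations). First I would set $f_r(q)=f(rq)$ for $0<r<1$; this is again entire slice regular by Theorem \ref{thm1}, and $\lim_{r\to1^-}f(rq)=f(q)$ pointwise. Performing the change of variable $w=rz$ on $\mathbb{C}_I$ (so that $dm_I(z)=r^{-2}dm_I(w)$), exactly as in Step 1 of Theorem \ref{thm_first_kind} but now on the two-dimensional slice, and using that $e^{-p\alpha|w|^2(r^{-2}-1)/2}\le1$ for $0<r<1$, dominated convergence yields $\lim_{r\to1^-}\|f_r-f\|_{p,\alpha,I}=0$.

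Step 2 (Taylor polynomials of the dilates). Next I would fix $r\in(0,1)$ and show $f_r$ is a limit of quaternionic polynomials. By Lemma \ref{GC}, $|f_r(q)|=|f(rq)|\le 4e^{\alpha r^2|q|^2/2}\|f\|_{\mathcal{F}^{\alpha,p}_{Slice}(\mathbb{H})}$; choosing $\beta$ with $\alpha r^2<\beta<\alpha$, the gain $\alpha r^2-\beta<0$ forces $f_r\in\mathcal{F}^2_{\beta,I}(\mathbb{H})$, while the gain $\beta-\alpha<0$ (via a pointwise growth estimate as in Lemma \ref{GC}, applied with $\beta$) gives a continuous embedding $\mathcal{F}^2_{\beta,I}(\mathbb{H})\hookrightarrow\mathcal{F}^p_{\alpha,I}(\mathbb{H})$, i.e. $\|h\|_{p,\alpha,I}\le K\|h\|_{2,\beta,I}$. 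Now the crucial structural point: in the Hilbert space $\mathcal{F}^2_{\beta,I}(\mathbb{H})$ the monomials $q^k$ restrict on $\mathbb{C}_I$ to an orthogonal system (unlike in the first kind), since by Lemma \ref{split} the question reduces to the classical complex Fock space, where $(z^k)_k$ is an orthogonal basis. Hence the Taylor partial sums $P_{r,N}(q)=\sum_{k=0}^N q^k a_k r^k$ of $f_r$ — with $a_k=\frac{1}{k!}\frac{\partial^k f}{\partial x^k}(0)$ from Theorem \ref{thm1} — converge to $f_r$ in $\mathcal{F}^2_{\beta,I}(\mathbb{H})$, and therefore also in $\|\cdot\|_{p,\alpha,I}$ by the embedding. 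A diagonal choice $r_n\to1^-$ together with $N_n$ large enough that $\|P_{r_n,N_n}-f_{r_n}\|_{p,\alpha,I}<1/n$ then gives $P_n:=P_{r_n,N_n}$ with $\|P_n-f\|_{p,\alpha,I}\to0$ (for $0<p<1$ one combines the two estimates through $\|P_n-f\|_{p,\alpha,I}^p\le\|P_n-f_{r_n}\|_{p,\alpha,I}^p+\|f_{r_n}-f\|_{p,\alpha,I}^p$), and Proposition \ref{equiva} transfers the convergence to every $I\in\mathbb{S}$.

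The main obstacle is the range $0<p\le1$, where (exactly as in the complex case) the Taylor polynomials of $f$ itself need not converge in the Fock quasi-norm; the device that rescues the argument is the dilation, which places $f_r$ into the Hilbertian space $\mathcal{F}^2_{\beta,I}(\mathbb{H})$ with the strictly smaller weight $\beta<\alpha$, after which the orthogonality of the slice monomials makes the Taylor truncations converge. The one point to verify with care is precisely this orthogonality together with the completeness of $(q^k)_k$ on the slice, which is where the Splitting Lemma and the classical complex Fock theory enter, and which is exactly the feature that fails for the Fock space of the first kind.
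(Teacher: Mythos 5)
Your proposal is correct and follows essentially the same two-step route as the paper: dilation $f_r$ with $\lim_{r\to1^-}\|f_r-f\|_{p,\alpha,I_0}=0$, then embedding $f_r$ into the Hilbert space $\mathcal{F}^{2}_{\beta,I}(\mathbb{H})$ with $\alpha r^2<\beta<\alpha$, approximating there by (Taylor) polynomials using that the monomials form an orthogonal basis, and transferring back via the continuous embedding and Proposition \ref{equiva}. The only cosmetic difference is that you justify the orthogonality and completeness of $(q^k)_k$ on the slice via the Splitting Lemma and classical complex Fock theory, whereas the paper cites \cite{AlpayColomboSabadiniSalomon2014} for the orthonormal basis $e_k(q)=\sqrt{\beta^k/k!}\,q^k$.
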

\begin{proof} The proof is divided  in two steps.

{\bf Step 1.} Fix $I_{0}\in \mathbb{S}$. For $0<r<1$, we define $f_{r}(q)=f(r q)$, $q\in \mathbb{H}$. By hypothesis, we have $f\in \mathcal{F}^{p}_{\alpha, I_{0}}(\mathbb{H})$, i.e. $f_{r}$ is an entire function on $\mathbb{C}_{I_{0}}$.

Firstly, we prove that $\lim_{r\to 1^{-}}\|f_{r}-f\|_{p, \alpha, I_{0}}=0$.
To this end, we note that since the restriction of $f_r$ is entire on $\mathbb{C}_{I_{0}}$, it is continuous, which evidently implies that pointwise we have $\lim_{r\to 1^{-}}f(r q)=f(q)$, for all $q\in \mathbb{C}_{I_{0}}$.

Now, for $f\in \mathcal{F}^{p}_{\alpha, I_{0}}(\mathbb{H})$, by setting $r q=w$ and taking into account that as in the proof of Theorem 2.1 in \cite{gasa3} (see also \cite{Bloom}), we have $d m_{I_{0}}(q)=\frac{1}{r^{2}}d m_{I_{0}}(w)$, we obtain

\[
\begin{split}
\|f_{r}\|^{p}_{p, \alpha, I_{0}}&= \frac{\alpha p}{2\pi}\int_{\mathbb{C}_{I_{0}}}|f(r q) e^{-\alpha |q|^{2}/2}|^{p}d m_{I_{0}}(q)\\
&=\frac{\alpha p}{2\pi}\frac{1}{r^{2}}\cdot \int_{\mathbb{C}_{I_{0}}}|f(w)e^{-\alpha |w|^{2}/2}|^{p}\cdot e^{-p \alpha |w|^{2}(r^{-2}-1)/2} d m_{I_{0}}(w).
\end{split}
\]
Since for all $w\in \mathbb{C}_{I_{0}}$ and $0<r<1$ we have $$e^{-p \alpha |w|^{2}(r^{-2}-1)/2}\le 1,$$ by applying the dominated  convergence theorem we easily obtain $\lim_{r\to 1^{-1}}\|f_r\|^{p}_{p, \alpha, I_{0}}=\|f\|^{p}_{p, \alpha, I_{0}}$.

Therefore, an application of the above mentioned Lemma 3.17 in \cite{HKZ} (see the proof of Theorem 2.1) leads to $\lim_{r\to 1^{-1}}\|f_r - f\|_{p, \alpha, I_{0}}=0$.

{\bf Step 2.} This part is exactly the same as for the case of complex variable, see the proof of part (b) in Proposition 2.9, p. 39  in \cite{Zhu_1}, but reasoning on $\mathbb{C}_{I_{0}}$. Indeed, let $0<r<1$ and choose $r^2\alpha<\beta<\alpha$.  Lemma \ref{GC} allows to see that $f_r\in\mathcal{F}^{\beta,2}_{Slice}(\mathbb{H})$ since $\alpha r^2<\beta$. On the other hand, the condition $\beta<\alpha$ combined with the Lemma \ref{GC} show that $\mathcal{F}^{\beta,2}_{Slice}(\mathbb{H})$ is continuously embedded in $\mathcal{F}^{\alpha, p}_{Slice}(\mathbb{H})$. Moreover, for any $h\in\mathcal{F}^{\beta,2}_{Slice}(\mathbb{H})$ there exists $c=c(p,\alpha,\beta)>0$ such that we have the following estimate
\begin{equation} \label{esth}
\Vert{h}\Vert_{\mathcal{F}^{\alpha, p}_{Slice}(\mathbb{H})}\leq c\Vert{h}\Vert_{\mathcal{F}^{\beta,2}_{Slice}(\mathbb{H})}.
\end{equation}
Note that the family of functions given by $$\displaystyle e_k(q):=\sqrt{\frac{\beta^k}{k!}}q^k,$$ forms an orthonormal basis of $\mathcal{F}^{\beta,2}_{Slice}(\mathbb{H})$ according to \cite{AlpayColomboSabadiniSalomon2014} and $f_r\in\mathcal{F}^{\beta,2}_{Slice}(\mathbb{H})$ for any $0<r<1$. Thus, there exists a sequence $(P_n)_{n\in\mathbb{N}}$ of quaternionic polynomials with right coefficients such that $\Vert{f_r-P_n}\Vert_{\mathcal{F}^{\beta,2}_{Slice}(\mathbb{H})}\longrightarrow 0$ when $n\rightarrow \infty$. Therefore, we just need to use the estimate \eqref{esth} to conclude that the polynomials $(P_n)_{n\in\mathbb{N}}$ approximate $f_r$ in $\mathcal{F}^{\alpha, p}_{Slice}(\mathbb{H})$ for any $0<r<1$.

Note that Proposition 3.1 implies that any other norm (or quasi-norm, depending
on $p$), $\|\cdot \|_{p, \alpha, I}$ with $I\in \mathbb{S}$, is equivalent to the norm (quasi-norm) $\|\cdot \|_{p, \alpha, I_{0}}$. Thus, the  polynomial sequence $(P_{n})_{n\in \mathbb{N}}$ converges to $f$ in any norm (quasi-norm) $\|\cdot \|_{p, \alpha, I}$,
fact which proves the theorem.
\end{proof}
\begin{Rk}{\rm
 The approximation in Fock spaces of the second kind is based on the Taylor expansion since the quaternionic monomials form an orthogonal basis of the Hilbert Fock space of the second kind.}
\end{Rk}
    We now provide a constructive proof for the density result in Theorem \ref{thm_second_kind}, for $1\le p<+\infty$, with quantitative estimates in terms of higher order moduli of smoothness and of the best approximation quantity.

For this end, we introduce the following definition, in which we keep the notations from Section 2.

\begin{Dn}
Let $0<p<+\infty$, $I\in \mathbb{S}$ and $f\in \mathcal{F}^{p}_{\alpha, I}(\mathbb{H})$.
The $L^{p}$-moduli of smoothness of $k$-th order  is defined by
$$\omega_{k}(f ; \delta)_{{\cal{F}}^{p}_{\alpha, I}}=\sup_{0\le |h|\le \delta} \left \{\int_{\mathbb{C}_{I}}|\Delta^{k}_{h}f(z)|^{p}\cdot [e^{-\alpha |z|^{2}/2}]^{p}d m_{I}(z)\right \}^{1/p}=\sup_{0\le |h|\le \delta} \|w_{\alpha} \Delta^{k}_{h}f\|_{L^{p}(\mathbb{C_{I}})},$$
where $k\in \mathbb{N}$, $w_{\alpha}(z)=e^{-\alpha |z|^{2}/2}$,
$$\Delta_{h}^{k}f(z)=\sum_{s=0}^{k}(-1)^{k+s}{k \choose s}f(z e^{I s h}) \mbox{ and }
\|f\|_{L^{p}(\mathbb{C}_{I})}=\left (\int_{\mathbb{C}_{I}}|f(z)|^{p}d m_{I}(z)\right )^{1/p}.$$
(In other words, $\omega_{k}(f ; \delta)_{{\cal{F}}^{p}_{\alpha, I}}=\omega_{k}(f; \delta)_{w_{\alpha}, L^{p}(\mathbb{C}_{I})}$ is a weighted modulus of smoothness.)
\\
The best approximation quantity is defined by
$$E_{n}(f)_{p, \alpha, I}=\inf\{\|f-P\|_{p, \alpha, I} ; P\in {\cal P}_{n}\}=\inf\{\|w_{\alpha}(f-P)\|_{L^{p}(\mathbb{C}_{I})}; P\in {\cal P}_{n}\},$$
where ${\cal P}_{n}$ denotes the set of all polynomials of degree $\le n$.
\end{Dn}
We point out that, as in the case of the $L^{p}$-moduli of smoothness for functions of real variable (see, e.g.,
\cite{DeVore}, pp. 44-45), one may show that $$\lim_{\delta\to 0}\omega_{k}(f ; \delta)_{{\cal{F}}^{p}_{\alpha, I}}=0,$$
\begin{equation}\label{eq6}
\omega_{k}(f ; \lambda\cdot \delta)_{{\cal{F}}^{p}_{\alpha, I}}\le (\lambda+1)^{k}\cdot \omega_{k}(f ; \delta)_{{\cal{F}}^{p}_{\alpha, I}}, \mbox{ if } 1\le p <+\infty
\end{equation}
and
\begin{equation}\label{eq7}
[\omega_{k}(f ; \lambda\cdot \delta)_{{\cal{F}}^{p}_{\alpha, I}}]^{p}\le (\lambda+1)^{k}\cdot [\omega_{k}(f ; \delta)_{{\cal{F}}^{p}_{\alpha, I}}]^{p}, \mbox{ if } 0<p<1.
\end{equation}
In fact, this is a consequence of the fact that setting (for fixed $z$) $g(x)=f(ze^{i x})$, we get $\Delta_{h}^{k}f(z)=\overline{\Delta}_{h}^{k}g(0)$, where $$\overline{\Delta}_{h}^{k}g(x_{0})=\sum_{s=0}^{k}(-1)^{s+k}{k \choose s}g(x_{0}+s h).$$

For any $1\le p<+\infty$ and $f\in \mathcal{F}^{\alpha, p}_{Slice}(\mathbb{H})$, we introduce the convolution operators
$$L_{n}(f)(q)=\int_{-\pi}^{\pi}f(q e^{I_{q} t})\cdot K_{n}(t)d t, \, q\in \mathbb{H},$$
where $K_{n}(t)$ is a positive and even trigonometric polynomial with the property $\int_{-\pi}^{\pi}K_{n}(t)d t=1$.

One can consider the special case of the Fej\'er kernel $$K_{n}(t)=\frac{1}{2 \pi n}\cdot \left (\frac{\sin(n t/2)}{\sin(t/2)}\right )^{2},$$ and, in this case, $L_{n}(f)(q)$ will be denoted by $F_{n}(f)(q)$.

For $$K_{n, r}(t)=\frac{1}{\lambda_{n, r}}\cdot \left (\frac{\sin(n t/2)}{\sin(t/2)}\right )^{2 r},$$ where
$r$ is the smallest integer such that $r\ge \frac{p(m+1)+2}{2}$, $m\in \mathbb{N}$ and the constants $\lambda_{n, r}$ are such that
$\int_{-\pi}^{\pi}K_{n, r}(t)d t=1$, we set
$$I_{n, m, r}(f)(q)=-\int_{-\pi}^{\pi}K_{n, r}(t)\sum_{k=1}^{m+1}(-1)^{k}{m+1 \choose k}f(q e^{I_{q} k t})d t, \, q\in \mathbb{H}.$$
Moreover we define $V_{n}(f)(q)= 2 F_{2 n}(f)(q)-F_{n}(f)(q)$, $q\in \mathbb{H}$.

Following \cite{gasa1}, \cite{gasa2},
for fixed $I\in \mathbb{S}$, if $q\in  \mathbb{C}_{I}$ then $L_{n}(f)(q)$,
$I_{n, m, r}(f)(q)$ and $V_{n}(f)(q)$ are polynomials in $q$ on $\mathbb{C}_{I}$ , with the coefficients independent
of $I$, depending only on the series development of $f$. Consequently, as
functions of $q$, $L_{n}(f)(q)$, $I_{n, m, r}(f)(q)$ and $V_{n}(f)(q)$ are polynomials defined on  $\mathbb{H}$.

We can now state and prove our second main result:
\begin{theorem}\label{Th3.4}
Let $1\le p<+\infty$, $0<\alpha<+\infty$, $m\in \mathbb{N}\bigcup \{0\}$ and $f\in {\cal{F}}^{\alpha, p}_{Slice}(\mathbb{H})$ be arbitrary, fixed.

(i) $I_{n, m, r}(f)(q)$ is a quaternionic polynomial of degree $\le r(n-1)$, satisfying for any $I\in \mathbb{S}$ the estimate
$$\|I_{n, m, r}(f)-f\|_{p, \alpha, I}\le C_{p, m, r}\cdot \omega_{m+1}\left (f; \frac{1}{n}\right )_{{\cal{F}}^{p}_{\alpha, I}}, n\in \mathbb{N},$$
where $m\in \mathbb{N}$, $r$ is the smallest integer with $r\ge \frac{p(m+1)+2}{2}$ and $C(p, m, r)>0$ is a constant independent of $f$, $n$ and $I$.

(ii) $V_{n}(f)(q)$ is a quaternionic polynomial of degree $\le 2n-1$, satisfying for any $I\in \mathbb{S}$ the estimate
$$\|V_{n}(f)-f\|_{p, \alpha, I}\le [2^{(p-1)/p}\cdot (2^{p}+1)^{1/p}+1]\cdot E_{n}(f)_{p, \alpha, I},\, n\in \mathbb{N}.$$
\end{theorem}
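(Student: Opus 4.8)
The plan is to exploit the identities, recalled just before the statement, that for fixed $I\in\mathbb{S}$ and $q\in\mathbb{C}_I$ the operators $I_{n,m,r}(f)$ and $V_n(f)$ reduce to convolution operators acting on the restriction $f_I$, with kernels that are positive, even trigonometric polynomials of the stated degrees. Concretely, since $\Delta_h^k f(z)=\overline{\Delta}_h^k g(0)$ with $g(x)=f(ze^{Ix})$, the whole problem transfers to the estimation of weighted $L^p$-approximation of the $2\pi$-periodic function $g$ (or rather the family indexed by $z$) by its convolution against $K_{n,r}$ and against the Fejér-type kernels. Thus the strategy is: prove the two estimates on a single slice $\mathbb{C}_{I_0}$ by the classical periodic-convolution machinery, then invoke the equivalence of the norms $\|\cdot\|_{p,\alpha,I}$ from Proposition~\ref{equiva} to get the $I$-independent constants claimed. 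The degree counts ($\le r(n-1)$ and $\le 2n-1$) are read off directly from the degrees of $K_{n,r}$ and of $2F_{2n}-F_n$, together with the remark that the coefficients are $I$-independent.

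**Part (i): the Jackson-type estimate.**
First I would write
$$f(q)-I_{n,m,r}(f)(q)=\int_{-\pi}^{\pi}K_{n,r}(t)\,\Delta^{m+1}_{t}f(q)\,dt,$$
using $\int K_{n,r}=1$ and the binomial identity defining $I_{n,m,r}$ to recognize the inner sum as (the negative of the non-$s{=}0$ part of) a finite difference; the key algebraic check is that $f(q)-\big(-\sum_{k=1}^{m+1}(-1)^k\binom{m+1}{k}f(qe^{I_qkt})\big)=\Delta^{m+1}_t f(q)$. Taking $\|w_\alpha\,\cdot\,\|_{L^p(\mathbb{C}_I)}$, applying the generalized Minkowski (integral) inequality in the $t$-variable, and bounding $\|w_\alpha\Delta^{m+1}_t f\|_{L^p}\le\omega_{m+1}(f;|t|)_{\mathcal{F}^p_{\alpha,I}}$, I reduce matters to estimating $\int_{-\pi}^{\pi}K_{n,r}(t)\,\omega_{m+1}(f;|t|)_{\mathcal{F}^p_{\alpha,I}}\,dt$. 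Using the monotonicity property \eqref{eq6}, namely $\omega_{m+1}(f;|t|)\le (n|t|+1)^{m+1}\omega_{m+1}(f;1/n)$, this becomes $\omega_{m+1}(f;1/n)\cdot\int_{-\pi}^{\pi}K_{n,r}(t)(n|t|+1)^{m+1}dt$, and the choice $r\ge\frac{p(m+1)+2}{2}$ is exactly what guarantees the kernel moment $\int_{-\pi}^{\pi}K_{n,r}(t)(n|t|+1)^{m+1}dt\le C_{p,m,r}$ uniformly in $n$. This moment bound — verifying that $(\sin(nt/2)/\sin(t/2))^{2r}$ decays fast enough against $(n|t|+1)^{m+1}$ and that $\lambda_{n,r}\sim n^{2r-1}$ — is the computational heart of part (i).

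**Part (ii): the best-approximation estimate, and the main obstacle.**
For $V_n$ I would use that $F_n$ reproduces polynomials of low degree and is a contraction on the relevant weighted $L^p$ space (being convolution with a probability kernel, via Minkowski's inequality the operator norm of $f\mapsto F_n(f)$ in $\|\cdot\|_{p,\alpha,I}$ is $\le 1$). Writing $V_n=2F_{2n}-F_n$ one checks $V_n(P)=P$ for any $P\in\mathcal{P}_n$; then for the near-best polynomial $P_n^*$ with $\|f-P_n^*\|_{p,\alpha,I}=E_n(f)_{p,\alpha,I}$ one estimates
$$\|V_n(f)-f\|_{p,\alpha,I}\le\|V_n(f-P_n^*)\|_{p,\alpha,I}+\|P_n^*-f\|_{p,\alpha,I},$$
and bounds $\|V_n(g)\|_{p,\alpha,I}\le(2\cdot 1+1)$-type expressions, the precise constant $2^{(p-1)/p}(2^p+1)^{1/p}$ emerging from combining the two Fejér contractions through the triangle inequality with the $(a+b)^p\le 2^{p-1}(a^p+b^p)$ convexity bound. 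The step I expect to be the main obstacle is establishing rigorously that these convolution operators are bounded (ideally contractive) on the \emph{weighted} space $\|w_\alpha\,\cdot\,\|_{L^p(\mathbb{C}_I)}$: the weight $e^{-\alpha|z|^2/2}$ is not invariant under the rotation $z\mapsto ze^{Iks}$ on $\mathbb{C}_I$, but since $|ze^{Iks}|=|z|$ the weight \emph{is} rotation-invariant, so one must argue carefully that $w_\alpha(z)=w_\alpha(ze^{Ist})$ for all $s,t$, which rescues the Minkowski-contraction argument; checking this compatibility of the weight with the angular convolution, uniformly over the slices $\mathbb{C}_I$ so that the final constants are $I$-free, is the delicate point tying the two parts together.
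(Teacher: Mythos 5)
Your proposal follows essentially the same route as the paper: express $f-I_{n,m,r}(f)$ as the convolution of $K_{n,r}$ against the finite difference $\Delta_t^{m+1}f$, pass the weighted $L^p(\mathbb{C}_I)$ norm inside the $t$-integral, invoke the property $\omega_{m+1}(f;|t|)\le (n|t|+1)^{m+1}\omega_{m+1}(f;1/n)$ together with the moment bound for $K_{n,r}$ (the paper's inequality \eqref{eq8}), and for (ii) combine the reproduction $V_n(P_n^*)=P_n^*$ with a Lipschitz bound on $V_n$ obtained from the rotation invariance of $|z|$ (hence of the weight $w_\alpha$ and of $dm_I$). The only deviation is that you use the generalized Minkowski inequality where the paper uses Jensen's inequality with $\varphi(t)=t^p$ (so the paper works with $\omega_{m+1}(f;|t|)^p$ and the moment of order $(m+1)p$, for which the condition $r\ge\frac{p(m+1)+2}{2}$ is exactly calibrated); for $1\le p<\infty$ both give the stated estimate, and your appeal to Proposition~\ref{equiva} is not actually needed since the slice-wise constants are already $I$-independent.
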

\begin{proof}  We refer to \cite{gasa1}, \cite{gasa2} for the fact that the convolution operators $I_{n, m, r}(f)(q)$ and $V_{n}(f)(q)$ are polynomials of the corresponding degrees.

(i) We now apply the following  Jensen type inequality for integrals: if $\int_{-\pi}^{+\pi}G(u)d u=1$, $G(u)\ge 0$ for all $u\in [-\pi, \pi]$ and $\varphi(t)$ is a convex function on the range of the measurable function of real variable $F$, then
$$\varphi\left (\int_{-\pi}^{+\pi}F(u)G(u)d u\right )\le \int_{-\pi}^{+\pi}\varphi(F(u))G(u) du.$$
For any $m\in \mathbb{N}$ we define $r$ to be the smallest integer for which $r\ge \frac{p(m+1)+2}{2}$.

By setting $\varphi(t)=t^{p}$, $1\le p < \infty$, $q\in \mathbb{C}_{I}$, we get
\[
\begin{split}
|f(q)-I_{n, m, r}(f)(q)|^{p}&=\left |\int_{-\pi}^{\pi}\Delta_{t}^{m+1}f(q)K_{n, r}(t)d t\right |^{p}\\
&\le
\left [\int_{-\pi}^{\pi}|\Delta_{t}^{m+1}f(q)|K_{n, r}(t)dt\right ]^{p}\\
&\le \int_{-\pi}^{\pi}|\Delta_{t}^{m+1}f(q)|^{p}K_{n, r}(t)dt.
\end{split}
\]
Multiplying above by $[e^{-\alpha |q|^{2}/2}]^{p}$, integrating on $\mathbb{C}_{I}$ with respect to $d m_{I}(q)$ and making use of the Fubini's theorem, we get
\[
\begin{split}
&\int_{\mathbb{C}_{I}}|I_{n, m, r}(f)(q)-f(q)|^{p}\cdot [e^{-\alpha |q|^{2}/2}]^{p} d m_{I}(q)\\
&\le \int_{-\pi}^{\pi}\left [\int_{\mathbb{C}_{I}}|\Delta_{t}^{m+1}f(q)|^{p}\cdot [e^{-\alpha |q|^{2}/2}]^{p} d m_{I}(q)\right ]K_{n, r}(t)d t\\
&\le \int_{-\pi}^{\pi}\omega_{m+1}(f ; |t|)^{p}_{{\cal{F}}^{p}_{\alpha, I}}\cdot K_{n, r}(t)d t\\
&\le \int_{-\pi}^{\pi}\omega_{m+1}(f ; 1/n)^{p}_{{\cal{F}}^{p}_{\alpha, I}}(n |t|+1)^{(m+1)p}\cdot K_{n, r}(t)d t.
\end{split}
\]
By \cite{Lor}, p. 57, relation (5), for $r\in \mathbb{N}$ with $r\ge \frac{p(m+1)+2}{2}$, we get
\begin{equation}\label{eq8}
\int_{-\pi}^{\pi}(n |t|+1)^{(m+1)p}\cdot K_{n, r}(t)d t\le C_{p, m, r}<+\infty,
\end{equation}
which shows the estimate in (i).

(ii) Now, let $f, g\in \mathcal{F}^{\alpha, p}_{Slice}(\mathbb{H})$ and $1\le p <+\infty$. Since $\varphi(t)=t^{p}$ is convex, for all $a, b \ge 0$ we obviously get the inequality $(a+b)^{p}\le 2^{p-1}(a^{p}+b^{p})$, which for all $q\in \mathbb{C}_{I}$ implies
$$|V_{n}(f)(q)-V_{n}(g)(q)|\le 2|F_{2n}(f)(q)-F_{2n}(g)(q)|+|F_{n}(f)(q)-F_{n}(g)(q)|$$
$$\le 2\int_{-\pi}^{\pi}|f(q e^{I t})-g(q e^{I t})|\cdot K_{2n}(t)d t+\int_{-\pi}^{\pi}|f(q e^{I t})-g(q e^{I t})|\cdot K_{n}(t)d t$$
and
$$|V_{n}(f)(q)-V_{n}(g)(q)|^{p}\le 2^{p-1}\left [\left (2\int_{-\pi}^{\pi}|f(q e^{I t})-g(q e^{I t})|\cdot K_{2n}(t)d t\right )^{p}\right .$$
$$+\left .\left (\int_{-\pi}^{\pi}|f(q e^{I t})-g(q e^{I t})|\cdot K_{n}(t)d t\right )^{p}\right ]$$
$$\le 2^{p-1}\left [2^{p}\int_{-\pi}^{\pi}|f(q e^{I t})-g(q e^{I t})|^{p}\cdot K_{2n}(t)d t +
\int_{-\pi}^{\pi}|f(q e^{I t})-g(q e^{I t})|^{p}\cdot K_{n}(t)d t\right ].$$
Multiplying above with $[e^{-\alpha|q|^{2}/2}]^{p}=[e^{-\alpha|q e^{I t}|^{2}/2}]^{p}$,
integrating this inequality with respect to $d m_{I}(q)$ on $\mathbb{C}_{I}$ and reasoning as at the above point (i), we get
$$\|V_{n}(f)-V_{n}(g)\|^{p}_{p, \alpha, I}\le 2^{p-1}\left [2^{p}\int_{-\pi}^{\pi}\left (\int_{\mathbb{C}_{I}}|f(q e^{I t})-g(q e^{I t})|^{p}[e^{-\alpha|q e^{I t}|^{2}/2}]^{p}d m_{I}(q)\right )K_{2n}(t)d t +\right .$$
$$\left .\int_{-\pi}^{\pi}\left (\int_{\mathbb{C}_{I}}|f(q e^{I t})-g(q e^{I t})|^{p}[e^{-\alpha|q e^{I t}|^{2}/2}]^{p}d m_{I}(q)\right )K_{n}(t)d t\right ].$$
 Setting $F(q)=|f(q)-g(q)|^{p}[e^{-\alpha|q|^{2}/2}]^{p}$, $q\in \mathbb{C}_{I}$, writing $q=r\cos(\theta)+ I r\sin(\theta)$ and taking into account that
$$
d m_{I}(q)=\frac{1}{\pi}r d r d\theta,
$$
with easy calculations we get the equality
$$\int_{\mathbb{C}_{I}}|F(q e^{I t})|^{p}d m_{I}(q)=\int_{\mathbb{C}_{I}}|F(q)|^{p}d m_{I}(z),\mbox{ for all } t,$$
which replaced in the above inequality easily implies
$$\|V_{n}(f)-V_{n}(g)\|^{p}_{p, \alpha, I}\le 2^{p-1}[2^{p}\|f-g\|^{p}_{p, \alpha}+\|f-g\|^{p}_{p, \alpha}]=2^{p-1}(2^{p}+1)\|f-g\|^{p}_{p, \alpha},$$
that is
$$\|V_{n}(f)-V_{n}(g)\|_{p, \alpha, I}\le 2^{(p-1)/p}\cdot (2^{p}+1)^{1/p}\|f-g\|_{p, \alpha, I}.$$
Now, let us denote by $P_{n}^{*}$ a polynomial of best approximation by elements in ${\cal P}_{n}$ in the norm $\|\cdot \|_{p, \alpha, I}$, that is
$$E_{n}(f)_{p, \alpha, I}=\inf\{\|f-P\|_{p, \alpha, I} ; P\in {\cal P}_{n}\}=\|f-P_{n}^{*}\|_{p, \alpha, I}.$$
Being ${\cal P}_{n}$ finite dimensional for any fixed $n$, the polynomial $P_{n}^{*}$ exists.

 Mimicking the reasonings  in \cite{gal2}, p. 425 we get $V_{n}(P^{*}_{n})=P^{*}_{n}$, for all $q\in \mathbb{C}_{I}$, so that we have
\[
\begin{split}
\|f-V_{n}(f)\|_{p, \alpha, I}&\le \|f-P_{n}^{*}\|_{p, \alpha, I}+\|V_{n}(P_{n}^{*})-V_{n}(f)\|_{p, \alpha, I}\\
&\le E_{n}(f)_{p, \alpha, I}+2^{(p-1)/p}\cdot (2^{p}+1)^{1/p}\|P_{n}^{*}-f\|_{p, \alpha, I}\\
&=[2^{(p-1)/p}\cdot (2^{p}+1)^{1/p}+1]\cdot E_{n}(f)_{p, \alpha, I},
\end{split}
\]
which proves (ii) and the theorem.
\end{proof}
\begin{Rk} {\rm The result in Theorem \ref{Th3.4} evidently holds also in the complex Fock spaces. In this context, the result is new.}
\end{Rk}

In \cite{AlpayColomboSabadiniSalomon2014} the authors proved that the quaternionic Fock space of the second kind $\mathcal{F}^{\alpha,2}_{Slice}(\mathbb{H})$ is a right quaternionic Hilbert space whose reproducing kernel is given for all $(r,q)\in\mathbb{H}^2$ by \[
\begin{split}
K_\alpha(r,q)&:= e_*(\alpha r \overline{q})\\
&=\displaystyle \sum_{k=0}^\infty \frac{\alpha^k r^k\overline{q}^k}{k!} .
\end{split}
\]

Then, we denote by $\mathcal{R}$ the set of all functions of the form $$f(r)=\displaystyle \sum_{k=1}^nK_\alpha(r,q_k)b_k, \forall r\in\mathbb{H} $$ where $(b_k)_k, (q_k)_k\in\mathbb{H}$ for all $k=1,..,n.$ As a consequence of the Theorem \ref{thm_second_kind} we obtain the following result:
\begin{theorem}\label{RK2} Let $\alpha > 0$ and $0<p<\infty$. The set $\mathcal{R}$ is dense in the quaternionic Fock spaces of the second kind $\mathcal{F}^{\alpha, p}_{Slice}(\mathbb{H})$.
\end{theorem}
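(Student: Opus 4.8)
The plan is to leverage the density of polynomials already established in Theorem \ref{thm_second_kind}. Since $\mathcal R$ is closed under finite sums and under multiplication on the right by quaternions, it is a right $\mathbb H$-submodule of $\mathcal F^{\alpha,p}_{Slice}(\mathbb H)$, and so is its closure $\overline{\mathcal R}$. Hence it suffices to prove that every monomial $r^n$ ($n\in\mathbb N\cup\{0\}$) lies in $\overline{\mathcal R}$: this forces every polynomial $\sum_n r^n a_n$ into $\overline{\mathcal R}$, and combined with Theorem \ref{thm_second_kind} it yields $\overline{\mathcal R}=\mathcal F^{\alpha,p}_{Slice}(\mathbb H)$. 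As a preliminary sanity check one verifies $\mathcal R\subseteq\mathcal F^{\alpha,p}_{Slice}(\mathbb H)$: for fixed $q$ the map $r\mapsto K_\alpha(r,q)=\sum_{k\ge0}r^k\frac{\alpha^k\overline q^{\,k}}{k!}$ is an entire slice regular function (a power series with right coefficients), and on any slice one has $|K_\alpha(z,q)|\le e^{\alpha|q|\,|z|}$, so the Gaussian weight makes $\|K_\alpha(\cdot,q)\|_{p,\alpha,I}$ finite.

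The key device is to restrict the second variable to the real axis: for $s\in\mathbb R$ we have $\overline s=s$ and $K_\alpha(r,s)=\sum_{k\ge0}\frac{(\alpha s)^k}{k!}r^k$, whence $\partial_s^{\,n}K_\alpha(r,s)\big|_{s=0}=\alpha^n r^n$. I would therefore approximate this derivative by $n$-th order finite differences in the real parameter,
$$G_h(r):=\frac{1}{h^n}\sum_{j=0}^{n}(-1)^{n-j}\binom{n}{j}K_\alpha(r,jh),\qquad h\neq 0,$$
which belongs to $\mathcal R$ since each $jh$ is real and each coefficient $\frac{1}{h^n}(-1)^{n-j}\binom nj$ is real. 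Expanding the series and invoking the identity $\sum_{j=0}^n(-1)^{n-j}\binom nj j^k=n!\,S(k,n)$ (the Stirling number factor vanishes for $k<n$ and equals $n!$ for $k=n$), one obtains
$$G_h(r)=\alpha^n r^n+\sum_{k>n}\frac{\alpha^k}{k!}\,n!\,S(k,n)\,h^{k-n}\,r^k,$$
so the error $E_h:=G_h-\alpha^n(\cdot)^n$ is an explicit power series, each of whose terms carries a positive power of $h$.

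It then remains to show $\|E_h\|_{p,\alpha,I}\to 0$ as $h\to 0$ for one fixed $I$; by Proposition \ref{equiva} this gives convergence in every $\|\cdot\|_{p,\alpha,I}$, hence in $\|\cdot\|_{\mathcal F^{\alpha,p}_{Slice}(\mathbb H)}$, and dividing by $\alpha^n$ yields $r^n\in\overline{\mathcal R}$. This tail estimate is where the real work lies and is the main obstacle. Using the triangle inequality for $p\ge1$ (respectively the $p$-subadditivity of $\|\cdot\|_{p,\alpha,I}^p$ for $0<p<1$) together with the Gaussian moment
$$\|(\cdot)^k\|_{p,\alpha,I}^p=\frac{\alpha p}{2\pi}\int_{\mathbb C_I}|z|^{kp}e^{-p\alpha|z|^2/2}\,dm_I(z),$$
a Gamma-type integral of order $\sim\Gamma\!\left(\tfrac{kp}{2}+1\right)$, one controls $\sum_{k>n}\frac{\alpha^k}{k!}\,n!\,|S(k,n)|\,|h|^{k-n}\,\|(\cdot)^k\|_{p,\alpha,I}$. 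Here $\|(\cdot)^k\|_{p,\alpha,I}$ grows only like $k^{k/2}$ (up to a geometric factor) while $1/k!$ decays like $(e/k)^k$, so the general term is dominated by $C\,A^k|h|^{k-n}/k^{k/2}$ for suitable constants; the resulting series converges for all $h$ and, having no term independent of $h$, tends to $0$ as $h\to0$.

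In summary, the proof reduces density of $\mathcal R$ to density of polynomials, realizes each monomial as a derivative in a real parameter of the reproducing kernel, and approximates that derivative by finite differences lying in $\mathcal R$. The only nontrivial point is the norm convergence of those finite differences, which follows from a routine but careful estimate balancing the factorial, Stirling-number, and Gaussian-moment factors.
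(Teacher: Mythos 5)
Your proof is correct, but it takes a genuinely different route from the paper. The paper's argument is non-constructive and proceeds in two steps: first, for $p=2$ it shows $\mathcal{R}^{\perp}=\{0\}$ in the Hilbert space $\mathcal{F}^{\beta,2}_{Slice}(\mathbb{H})$ directly from the reproducing property $\langle f,K_\beta(\cdot,q)\rangle=f(q)$; second, for general $p$ it picks $\beta<\alpha$ so that $\mathcal{F}^{\beta,2}_{Slice}(\mathbb{H})$ embeds continuously into $\mathcal{F}^{\alpha,p}_{Slice}(\mathbb{H})$, approximates each polynomial by $\beta$-kernel combinations in the Hilbert norm, and converts these into elements of $\mathcal{R}$ via the rescaling identity $K_\beta\bigl(r,\tfrac{\alpha}{\beta}q\bigr)=K_\alpha(r,q)$; density of polynomials (Theorem \ref{thm_second_kind}) then finishes, exactly as in your scheme. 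You replace both the duality argument and the embedding lemma by an explicit construction: realizing $\alpha^n r^n$ as $\partial_s^n K_\alpha(r,s)\big|_{s=0}$ and approximating by finite differences $\frac{1}{h^n}\sum_j(-1)^{n-j}\binom{n}{j}K_\alpha(\cdot,jh)$ lying in $\mathcal{R}$. This buys a constructive, quantitative approximation (explicit rate in $h$ from the leading $O(h)$ term of the tail) and works uniformly for all $0<p<\infty$ without detouring through the $p=2$ theory, at the cost of the tail estimate; that estimate does close, since $S(k,n)\le\binom{k}{n}n^{k-n}$ is geometric in $k$ for fixed $n$, $\|(\cdot)^k\|_{p,\alpha,I}\asymp C^k\,\Gamma(kp/2+1)^{1/p}\sim C^k k^{k/2}$ is the same for every $I$ (so the supremum over $I$ in the $\mathcal{F}^{\alpha,p}_{Slice}$ norm costs nothing), and $1/k!\sim(e/k)^k$ dominates. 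Two small points you should make explicit: passing the norm (or, for $0<p<1$, the $p$-th power of the quasi-norm) inside the infinite series requires a Fatou or completeness argument applied to the partial sums, and the closure $\overline{\mathcal{R}}$ is a right submodule because $f\mapsto fc$ is bounded, as $|f(q)c|=|f(q)|\,|c|$. With those remarks added, your argument is complete.
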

\begin{proof}
\begin{itemize}
\item[i)] The result is clear in the Hilbert case when $p=2$. Indeed, we only use the reproducing kernel property to see that the orthogonal of $\mathcal{R}$  is reduced to zero.
\item[ii)] For $p>0$, let $f$ be a quaternionic polynomial with right coefficients. Then, there exists $0<\beta<\alpha$ such that $\mathcal{F}^{\beta, 2}_{Slice}(\mathbb{H})$ is continuously embedded in $\mathcal{F}^{\alpha, p}_{Slice}(\mathbb{H})$. Note that since $f$ is  a polynomial, by the Hilbert case it can be approximated by a sequence  of $\mathcal{R}$ in the topology norm of  $\mathcal{F}^{\beta, 2}_{Slice}(\mathbb{H})$. Thus, let $q_1,...,q_n\in\mathbb{H}$ and $(a_k)_{k=1,...,n}\subset\mathbb{H}$ be such that $\displaystyle\|f-\sum_{k=1}^n K_\beta^{\frac{\alpha}{\beta}q_k}a_k,\|_{\mathcal{F}^{\beta,2}_{Slice}(\mathbb{H})}$ tends to zero as $n\rightarrow\infty$. However, there exists $c>0$ such that we have the following estimate

\[
\begin{split}
\|f-\sum_{k=1}^n K_\alpha^{q_k}a_k,\|_{\mathcal{F}^{\alpha,p}_{Slice}(\mathbb{H})}&\le c\|f-\sum_{k=1}^n K_\alpha^{q_k}a_k,\|_{\mathcal{F}^{\beta,2}_{Slice}(\mathbb{H})}\\
&\le
c\|f-\sum_{k=1}^n K_\beta^{\frac{\alpha}{\beta}q_k}a_k,\|_{\mathcal{F}^{\beta,2}_{Slice}(\mathbb{H})}.
\end{split}
\] This shows that $\displaystyle\|f-\sum_{k=1}^n K_\alpha^{q_k}a_k,\|_{\mathcal{F}^{\alpha,p}_{Slice}(\mathbb{H})}$ tends to zero as $n\rightarrow\infty$, for any quaternionic polynomial $f$. However, in Theorem \ref{thm_second_kind} we proved that the set of quaternionic polynomials is dense in any quaternionic Fock space of the second kind. Hence, $\mathcal{R}$ is dense in the quaternionic Fock spaces of the second kind $\mathcal{F}^{p}_{\alpha}(\mathbb{H})$.
\end{itemize}

\end{proof}
The order and type of slice regular entire functions on quaternions were introduced
in Chapter 5 of the book \cite{ColomboSabadiniStruppa2016}. In the setting of  the Fock spaces $\mathcal{F}^{\alpha, p}_{Slice}(\mathbb{H})$, we have:
\begin{proposition}
Let $0<p< \infty$ and $f\in\mathcal{F}^{\alpha, p}_{Slice}(\mathbb{H})$. Then, $f$ is of order less or equal than $2$. Moreover, if $f$ is of order $2$, then it is of type $\sigma(f)\leq \frac{\alpha}{2}.$
\end{proposition}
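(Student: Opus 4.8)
The plan is to derive both assertions directly from the global pointwise growth bound furnished by Lemma \ref{GC}, so that the proposition becomes a routine asymptotic computation. Recall that, following Chapter 5 of \cite{ColomboSabadiniStruppa2016}, the order of an entire slice regular function $f$ is defined through its maximum modulus $M_{f}(r):=\max_{|q|=r}|f(q)|$ by
$$\rho(f)=\limsup_{r\to\infty}\frac{\log\log M_{f}(r)}{\log r},$$
and, when $0<\rho(f)<\infty$, its type is
$$\sigma(f)=\limsup_{r\to\infty}\frac{\log M_{f}(r)}{r^{\rho(f)}}.$$
Accordingly, the first step is simply to record that Lemma \ref{GC}, being uniform over the sphere $|q|=r$, yields for every $r>0$ the estimate
$$M_{f}(r)\le 4\,e^{\frac{\alpha}{2}r^{2}}\,\|f\|_{\mathcal{F}^{\alpha, p}_{Slice}(\mathbb{H})}.$$

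Next I would bound the order. Writing $C:=\log\bigl(4\|f\|_{\mathcal{F}^{\alpha, p}_{Slice}(\mathbb{H})}\bigr)$, the displayed inequality gives $\log M_{f}(r)\le C+\frac{\alpha}{2}r^{2}$, whose right-hand side is dominated by $\frac{\alpha}{2}r^{2}$ for $r$ large. Hence $\log\log M_{f}(r)\le 2\log r+O(1)$, and dividing by $\log r$ and passing to the $\limsup$ gives $\rho(f)\le 2$. One should note the degenerate cases separately: if $f\equiv 0$ or $f$ is constant then $M_{f}(r)$ stays bounded and the order is $0$, so the inequality $\rho(f)\le 2$ holds trivially.

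Finally, assuming $f$ has order exactly $2$, I would compute the type with $\rho(f)=2$ in the denominator. From $\log M_{f}(r)\le C+\frac{\alpha}{2}r^{2}$ one obtains
$$\frac{\log M_{f}(r)}{r^{2}}\le \frac{C}{r^{2}}+\frac{\alpha}{2},$$
and letting $r\to\infty$ the first term vanishes, so that $\sigma(f)=\limsup_{r\to\infty}\frac{\log M_{f}(r)}{r^{2}}\le \frac{\alpha}{2}$, which is the asserted bound.

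I do not expect any genuine obstacle: the whole substance is already contained in the growth estimate of Lemma \ref{GC}, and the remainder mirrors the classical complex Fock-space argument. The only point deserving care is to confirm that the definitions of order and type imported from \cite{ColomboSabadiniStruppa2016} are expressed in terms of the maximum modulus $M_{f}(r)$, so that the uniform-in-$|q|$ bound of Lemma \ref{GC} may be inserted verbatim.
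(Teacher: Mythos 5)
Your proposal is correct and follows essentially the same route as the paper: both derive $M_f(r)\le c\,e^{\frac{\alpha}{2}r^2}\|f\|_{\mathcal{F}^{\alpha, p}_{Slice}(\mathbb{H})}$ from the pointwise growth estimate of Lemma \ref{GC} and then read off the bounds on the order and the type directly from the definitions. Your version merely spells out the constant bookkeeping and the degenerate cases in slightly more detail than the paper does.
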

\begin{proof}
Note that $f\in\mathcal{F}^{\alpha, p}_{Slice}(\mathbb{H})$, then by Lemma \ref{GC}, we have $$\vert f(q)\vert \leq ce^{\frac{\alpha}{2}\vert{q}\vert^2}\Vert f \Vert_{\mathcal{F}^{\alpha, p}_{Slice}(\mathbb{H})}.$$ In particular, we have $$M_f(r)=\underset {\vert q\vert=r} \max \vert f(q) \vert \leq ce^{\frac{\alpha}{2}r^2}\Vert f \Vert_{\mathcal{F}^{\alpha, p}_{Slice}(\mathbb{H})}.$$

Therefore, $$\rho(f)=\lim_{r\rightarrow \infty}\frac{\log (\log M_f(r))}{\log r} \leq 2.$$

Moreover, if $\rho(f)=2,$ then we have $$\sigma(f)=\lim_{r\rightarrow \infty}\frac{\log M_f(r)}{r^2} \leq \frac{\alpha}{2}.$$
\end{proof}

\noindent{\bf Acknowledgements} \\ \\
Kamal Diki acknowleges the support of the project “INdAM Doctoral Programme in Mathematics and/or Applications Cofunded by Marie Sklodowska-Curie Actions”, acronym: INdAM-DP-COFUND-2015, grant number: 713485.

\end{document}